 \newtheorem{theorem}{Theorem}[section]
\newtheorem{proposition}[theorem]{Proposition}
\newtheorem{lemma}[theorem]{Lemma}
\newtheorem{corollary}[theorem]{Corollary}
\theoremstyle{definition}
\newtheorem{definition}[theorem]{Definition}
\newtheorem{remark}[theorem]{Remark}
\newtheorem{example}[theorem]{Example}
\begin{document}

\title{Straightening laws for Chow rings of matroids}          
    
\author{Matt Larson}
\address{Stanford U. Department of Mathematics, 450 Jane Stanford Way, Stanford, CA 94305}
\email{mwlarson@stanford.edu}

\begin{abstract}
We give elementary and non-inductive proofs of three fundamental theorems about Chow rings of matroids: the standard monomial basis, Poincar\'e duality, and the dragon-Hall--Rado formula. Our approach, which also works for augmented Chow rings of matroids, is based on a straightening law. This approach gives a decomposition of the Chow ring of a matroid into pieces indexed by flats. 
\end{abstract}
 
\maketitle

\vspace{-20 pt}

\section{Introduction}

A \emph{matroid} $\mathrm{M}$ is a finite nonempty atomic ranked lattice $\mathcal{L}_{\mathrm{M}}$ whose rank function $\operatorname{rk} \colon \mathcal{L}_{\mathrm{M}} \to \mathbb{Z}$ is submodular:
$$\operatorname{rk}(F_1 \vee F_2) + \operatorname{rk}(F_1 \wedge F_2) \le \operatorname{rk}(F_1) + \operatorname{rk}(F_2) \text{ for all }F_1, F_2 \in \mathcal{L}_{\mathrm{M}}.$$
The minimal element of $\mathcal{L}_{\mathrm{M}}$ is usually denoted $\emptyset$ and the maximal element is the \emph{ground set}, which is usually denoted $E$. 
That $\mathcal{L}$ is \emph{atomic} means that every element is the join of the atoms it contains, and that it is \emph{ranked} means that every maximal chain in an interval $[\emptyset, F]$ has the same length, which is $\operatorname{rk}(F)$. 
The \emph{rank} of a matroid is $\operatorname{rk}(E)$. The elements of $\mathcal{L}_{\mathrm{M}}$ are called \emph{flats}. Let $\overline{\mathcal{L}}_{\mathrm{M}} = \mathcal{L}_{\mathrm{M}} \setminus \{\emptyset\}$. 

\begin{definition}
The \emph{Chow ring} $\underline{A}^{\bullet}(\mathrm{M})$ of $\mathrm{M}$ is the ring given by the presentation
$$\underline{A}^{\bullet}(\mathrm{M}) = \frac{\mathbb{Z}[h_F]_{F \in \overline{\mathcal{L}}_{\mathrm{M}}}}{((h_{F} - h_{G \vee F})(h_G - h_{G \vee F}) : F, G \in \overline{\mathcal{L}}_{\mathrm{M}}) + (h_a:  \text{ }a \text{ atom})}.$$
\end{definition}

Chow rings of matroids were first considered in \cite{FY} as a generalization of Chow rings of the wonderful compactifications of hyperplane arrangement complements, which were introduced in \cite{dCP95}. They play a key role in the proof of log-concavity results for matroids \cites{AHK18,BST20,ADH}.
The above definition is called the simplicial presentation of $\underline{A}^{\bullet}(\mathrm{M})$. It was first considered in \cite{YuzvinskySimplicial} and then extensively studied in \cite{BES}. 
See \cite{LLPP}*{Appendix A} for a proof of the equivalence between the above definition of $\underline{A}^{\bullet}(\mathrm{M})$ and the definition used in \cite{FY}. The Chow ring of a matroid is graded, with each $h_F$ in degree $1$. We now state three fundamental results about Chow rings of matroids. 

\begin{theorem}\cites{FY,BES,AHK18}\label{thm:nonaug}
Let $\mathrm{M}$ be a matroid of rank $r$. Then
\begin{enumerate}
\item The monomials
\begin{equation}\tag{\underline{SM}} \label{SMnonaug}
\{h_{F_1}^{a_1} \dotsb h_{F_\ell}^{a_\ell} : \emptyset = F_0 < F_1 < \dotsb < F_\ell, \text{ } a_i < \operatorname{rk}(F_i) - \operatorname{rk}(F_{i-1}) \text{ for }i=1, \dotsc, \ell\}
\end{equation}
form an integral basis for $\underline{A}^{\bullet}(\mathrm{M})$. 
\item  There is an isomorphism ${\operatorname{deg}} \colon \underline{A}^{r-1}(\mathrm{M}) \to \mathbb{Z}$ given by
\begin{equation}\tag{\underline{dHR}}\label{dHR}
{\operatorname{deg}}(h_{F_1} \dotsb h_{F_{r-1}}) = \begin{cases} 1 & \text{ if for all }\emptyset \not= T \subseteq [r-1], \text{ }\operatorname{rk}(\bigvee_{i \in T} F_i) \ge |T| + 1, \\ 0 & \text{ otherwise}.\end{cases}\end{equation}
\item The pairing 
\begin{equation}\tag{\underline{PD}} \label{PDnonaug}
\underline{A}^k(\mathrm{M}) \times \underline{A}^{r-1-k}(\mathrm{M}) \to \mathbb{Z} \text{ given by } (a, b) \mapsto \operatorname{deg}(ab)
\end{equation} is unimodular, i.e., it defines an isomorphism $\underline{A}^k(\mathrm{M}) \to \operatorname{Hom}(\underline{A}^{r-1-k}(\mathrm{M}), \mathbb{Z})$.
\end{enumerate}
\end{theorem}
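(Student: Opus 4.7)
The plan is to prove all three statements via a straightening law that produces a canonical form for elements of $\underline{A}^\bullet(\mathrm{M})$. The starting point is the identity obtained by expanding the defining square relation:
$$h_F \, h_G \;=\; h_{F \vee G}\bigl(h_F + h_G - h_{F \vee G}\bigr).$$
Iterated application rewrites any monomial as a $\mathbb{Z}$-linear combination of chain monomials $h_{F_1}^{a_1} \cdots h_{F_\ell}^{a_\ell}$ with $F_1 < \cdots < F_\ell$. Termination is ensured by a well-chosen ordering on monomials (e.g.\ tracking the multiset of flats that appear, refined by the maximum of their ranks).

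The more substantive step is the rank-bound reduction: a chain monomial violating $a_i < \operatorname{rk}(F_i) - \operatorname{rk}(F_{i-1})$ must be rewritten as a $\mathbb{Z}$-combination of standard monomials. The atom relations seed this: for any flat $F$ and any atom $a \not\le F$, the square relation combined with $h_a = 0$ yields $h_F h_{F \vee a} = h_{F \vee a}^2$, so for every cover $F \lessdot F'$ in $\mathcal{L}_{\mathrm{M}}$ we have $h_F h_{F'} = h_{F'}^2$. The task is to bootstrap this into a clean identity that expresses $h_{F_{i-1}}^{\epsilon} h_{F_i}^{\operatorname{rk}(F_i) - \operatorname{rk}(F_{i-1})}$ (for $\epsilon \in \{0,1\}$) in terms of chain monomials with strictly smaller exponent on $h_{F_i}$, at the cost of inserting intermediate flats between $F_{i-1}$ and $F_i$. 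Producing this identity directly from the presentation, without invoking deletion-contraction induction, is the heart of the straightening law and the main obstacle.

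Given a straightening algorithm, I would prove (2), (3), and the linear-independence half of (1) simultaneously via an explicit pairing. The rank-bound inequalities force the top-degree part of the standard basis to be one-dimensional: summing gives $\sum a_i \le \operatorname{rk}(F_\ell) - \ell$, so a standard monomial of degree $r-1$ must have $\ell = 1$ and $F_1 = E$, singling out $h_E^{r-1}$ as the unique top standard monomial. Defining $\operatorname{deg}\colon \underline{A}^{r-1}(\mathrm{M}) \to \mathbb{Z}$ by sending $h_E^{r-1} \mapsto 1$ and extending via the straightening algorithm, I would then associate to each standard monomial $m$ of degree $k$ an explicit dual $m^\vee$ of degree $r-1-k$: roughly, $m^\vee$ extends the chain underlying $m$ to a maximal chain ending at $E$ with complementary exponents. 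Careful bookkeeping of the straightened expansion of $m \cdot m'$ shows that the pairing matrix $\bigl(\operatorname{deg}(m \cdot m')\bigr)$ is triangular with respect to a chain-refinement order, with $\pm 1$ on the diagonal. This unimodularity delivers \eqref{PDnonaug}, forces linear independence in \eqref{SMnonaug}, and restricting $\operatorname{deg}$ to squarefree products of $r-1$ distinct flats recovers the dragon-Hall-Rado formula \eqref{dHR}.
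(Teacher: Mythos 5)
Your proposal identifies the right overall strategy (straightening to get spanning, then a triangular pairing to get linear independence, Poincar\'e duality, and the degree formula at once), but it has two genuine gaps, one of which you flag yourself. First, the ``rank-bound reduction'' you describe as the ``main obstacle'' is in fact much simpler than you expect, and you have misidentified its shape: there is no need to express the offending monomial as a \emph{sum} over inserted intermediate flats. Once you know that $h_F h_G = h_G^2$ whenever $G$ covers $F$ (which you derive correctly), an iterated application along a cover chain $F_{i-1} = G_0 \lessdot G_1 \lessdot \dotsb \lessdot G_d = F_i$ shows that if $a_i \ge d := \operatorname{rk}(F_i) - \operatorname{rk}(F_{i-1})$, then $h_{F_{i-1}}^{a_{i-1}} h_{F_i}^{a_i}$ is \emph{equal} to $h_{F_i}^{a_{i-1}+a_i}$ on the nose --- the chain collapses, with no new terms and no new flats. (Peel $h_{F_i}^{a_i}$ down to $h_{G_1}^{a_i-d+1} h_{G_2} \dotsb h_{G_{d-1}} h_{F_i}$, then push $h_{F_{i-1}}^{a_{i-1}} = h_{G_0}^{a_{i-1}}$ up the chain one cover at a time.) The remaining case $a_1 \ge \operatorname{rk}(F_1)$ vanishes in $\underline{A}^\bullet(\mathrm{M})$ because $h_{F_1}^{\operatorname{rk}(F_1)}$ is divisible by $h_a$ for an atom $a \le F_1$, which is zero. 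So straightening is easy; the real work is elsewhere.

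Second, your construction of the degree map is circular as stated. Defining $\operatorname{deg}$ by ``sending $h_E^{r-1}\mapsto 1$ and extending via the straightening algorithm'' presupposes that the straightened expression of an element is well-defined, i.e.\ that the straightening procedure is confluent, which is exactly equivalent to the linear independence of the standard monomials --- the thing you have not yet proved. (A priori, $\underline{A}^{r-1}(\mathrm{M})$ could be $0$.) The paper breaks this circle differently: it proves the \emph{augmented} version first, defines $\deg\colon A^r(\mathrm{M})\to\mathbb{Z}$ on the polynomial ring by the Hall--Rado formula and checks by hand that it kills each generating relation (a finite combinatorial verification using submodularity), and then transports this to $\underline{A}^{\bullet}(\mathrm{M})$ via multiplication by $x_\emptyset$. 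For the triangular pairing, the paper does \emph{not} use standard monomials on the dual side as you propose; it pairs each standard monomial $m$ with $d(m) = \prod_{G\in\mathcal{G}_m} x_G$, where the $x_G$ are the (non-monomial) alternating sums associated to the surjections $\varphi_G \colon A^\bullet(\mathrm{M}) \to A^\bullet(\mathrm{M}^G)\otimes \underline{A}^\bullet(\mathrm{M}_G)$. The projection formula $\deg(\varphi_G(y)) = \deg(x_G y)$ lets one compute $\deg(m'\cdot d(m))$ as a degree inside a tensor product of smaller Chow rings, which is what makes the triangularity transparent. Your proposed standard-monomial dual $m^\vee$ is closer to the route of \cite{BES}, but that route needs input (there, Poincar\'e duality for Boolean matroids) that you have not supplied, and computing $\deg(m\cdot m^\vee)$ by straightening again runs into the circularity above.
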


The augmented Chow ring of a matroid is a variant of the Chow ring of a matroid introduced in \cite{BHMPW20a}. It plays a key role in the proof of the top-heavy conjecture in \cite{BHMPW20b}.

\begin{definition}
The \emph{augmented Chow ring} $A^{\bullet}(\mathrm{M})$ of $\mathrm{M}$ is the ring given by the presentation
$$A^{\bullet}(\mathrm{M}) = \frac{\mathbb{Z}[h_F]_{F \in \overline{\mathcal{L}}_{\mathrm{M}}}}{((h_{F} - h_{G \vee F})(h_G - h_{G \vee F}) : F, G \in \overline{\mathcal{L}}_{\mathrm{M}}) + (h_a^2, h_ah_F - h_ah_{F \vee a} : F \in \overline{\mathcal{L}}_{\mathrm{M}} , \text{ }a \text{ atom})}.$$
\end{definition}
See \cite{LLPP}*{Appendix A} for a proof of the equivalence between the above definition and the definition used in \cite{BHMPW20a}. Note that $\underline{A}^{\bullet}(\mathrm{M})$ is a quotient of $A^{\bullet}(\mathrm{M})$. We now state three fundamental results about augmented Chow rings of matroids. 

\begin{theorem}\cites{ELPoly,BHMPW20a}\label{thm:aug}
Let $\mathrm{M}$ be a matroid of rank $r$. Then
\begin{enumerate}
\item The monomials
\begin{equation}\tag{SM} \label{SM}
\{h_{F_1}^{a_1} \dotsb h_{F_\ell}^{a_\ell} : \emptyset = F_0 < F_1 < \dotsb < F_\ell, \text{ } a_1 \le \operatorname{rk}(F_1), \text{ }  a_i < \operatorname{rk}(F_i) - \operatorname{rk}(F_{i-1}) \text{ for }i=2, \dotsc, \ell\}
\end{equation}
form an integral basis for ${A}^{\bullet}(\mathrm{M})$. 

\item There is an isomorphism ${\operatorname{deg}} \colon {A}^{r}(\mathrm{M}) \to \mathbb{Z}$ given by
\begin{equation}\tag{HR}\label{HR}
{\operatorname{deg}}(h_{F_1} \dotsb h_{F_{r}}) = \begin{cases} 1 & \text{ if for all } T \subseteq [r], \text{ }\operatorname{rk}(\bigvee_{i \in T} F_i) \ge |T|, \\ 0 & \text{ otherwise}.\end{cases}
\end{equation}
\item The pairing
\begin{equation}\tag{{PD}} \label{PD}
{A}^k(\mathrm{M}) \times {A}^{r-k}(\mathrm{M}) \to \mathbb{Z} \text{ given by } (a, b) \mapsto \operatorname{deg}(ab)
\end{equation} is unimodular.
\end{enumerate}
\end{theorem}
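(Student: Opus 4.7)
The plan is to mirror the straightening-law strategy used for Theorem~\ref{thm:nonaug}, with adjustments for the augmented relations $h_a^2 = 0$ and $h_a h_F = h_a h_{F \vee a}$. I would prove the three statements in an interlocking way: first (SM) spanning, then well-definedness of the degree map of (HR), and finally a triangular pairing argument that simultaneously yields (SM) linear independence and (PD).

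For spanning, fix a total order on monomials refining the partial order coming from degree-of-each-factor together with a lexicographic comparison on chains of flats. Each defining relation can then be read as a reduction rule that replaces a non-standard monomial by a combination of monomials strictly smaller in this order. Specifically, the quadric $(h_F - h_{G \vee F})(h_G - h_{G \vee F})$ handles the case when $F, G$ are incomparable, pushing $h_F h_G$ into products supported on chains containing $F \vee G$; the relations $h_a^2 = 0$ and $h_a h_F = h_a h_{F \vee a}$ together force any occurrence of an atom to be absorbed into the first chain factor, which accounts for the augmented exponent bound $a_1 \le \operatorname{rk}(F_1)$ in (SM). Termination of the rewriting shows that the set (SM) spans $A^\bullet(\mathrm{M})$.

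Next, I would construct $\operatorname{deg}$ by defining it on top-degree monomials via the formula (HR) and verifying that it descends to $A^r(\mathrm{M})$; this reduces to checking that the formula vanishes on each generating relation multiplied by an arbitrary degree-$(r-1)$ monomial, a purely combinatorial computation using submodularity of $\operatorname{rk}$ and the Hall--Rado transversal condition. For linear independence in (1) and for (3), I would assign to each standard monomial $m = h_{F_1}^{a_1} \cdots h_{F_\ell}^{a_\ell}$ of degree $k$ a complementary standard monomial $m^\vee$ of degree $r-k$, built by completing the chain $\emptyset < F_1 < \cdots < F_\ell$ to a maximal chain and saturating each exponent to the upper bound allowed by (SM). The claim is that the matrix $\operatorname{deg}(m \cdot (m')^\vee)$, indexed by standard monomials of complementary degrees, is upper-triangular with $1$'s on the diagonal in the chosen order. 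This single triangularity statement yields linear independence of standard monomials, the isomorphism $\operatorname{deg} \colon A^r(\mathrm{M}) \to \mathbb{Z}$, and unimodularity of the pairing all at once.

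The main obstacle will be verifying upper-triangularity. After straightening any product $m \cdot (m')^\vee$, one must check that (HR) evaluates to $1$ exactly when $m' = m$ and to $0$ whenever $m'$ precedes $m$ in the order. This requires tracking how each rewriting rule interacts with the transversal condition bounding $\operatorname{rk}(\bigvee_{i \in T} F_i) \ge |T|$, and, in the augmented case, how the boundary exponent $a_1 = \operatorname{rk}(F_1)$ (absent in the non-augmented theorem) contributes the extra dimension that differentiates $A^\bullet(\mathrm{M})$ from $\underline{A}^\bullet(\mathrm{M})$. Once triangularity is established, parts (1), (2), and (3) follow together without further induction.
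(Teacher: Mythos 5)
Your plan for parts (2) and the spanning half of (1) matches the paper: a straightening/rewriting argument shows the monomials in~\eqref{SM} span, and the degree map is defined on the free polynomial ring and checked on the generators of the ideal via submodularity. Those pieces are fine.

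The genuine gap is in the triangularity step. You propose to pair each standard monomial $m$ of degree $k$ with a \emph{standard monomial} $m^\vee$ of degree $r-k$ obtained by completing the chain of $m$ to a maximal chain and saturating the exponents allowed by~\eqref{SM}. This cannot work, and it fails already for the Boolean matroid of rank~$2$: take $m = h_{\{1\}}$. Completing $\emptyset < \{1\}$ to the maximal chain $\emptyset < \{1\} < \{1,2\}$ and saturating, the only nonzero exponent permitted by~\eqref{SM} is on $h_{\{1\}}$, so any such $m^\vee$ must again be a power of $h_{\{1\}}$. But $\operatorname{deg}(h_{\{1\}}^2) = 0$, since $T = \{1,2\}$ witnesses the failure of Hall--Rado: $\operatorname{rk}(\{1\}) = 1 < 2$. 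So the diagonal entry is $0$, not $1$, and the triangular matrix collapses. More generally, whenever $m$ uses the maximal exponent $a_1 = \operatorname{rk}(F_1)$, any further power of $h_{F_1}$ contributed by $m^\vee$ immediately violates the Hall--Rado condition, so products of two standard monomials supported on the same chain never give nonzero diagonal entries. Also note that the only \emph{top-degree} standard monomial is $h_E^r$ (a maximal chain $\emptyset < F_1 < \cdots < F_\ell = E$ admits standard monomials of degree at most $r - \ell + 1$), so there is simply no ``saturated'' standard monomial of degree $r$ on a longer chain to aim for.

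The paper's resolution is precisely to \emph{not} use monomials as dual elements. For each standard monomial $m$ with essential flats $\mathcal{G}_m = \{G_1 < \cdots < G_k\}$, it sets $d(m) = \prod_{G \in \mathcal{G}_m} x_G$, where $x_G$ is the alternating sum $-\sum_{S \subseteq \mathcal{A}}(-1)^{|S|} h_{G \vee \bigvee_{a \in S} a}$ over atoms $a \not\le G$. These $x_G$ are not standard monomials, but they interact with the surjections $\varphi_G \colon A^\bullet(\mathrm{M}) \to A^\bullet(\mathrm{M}^G) \otimes \underline{A}^\bullet(\mathrm{M}_G)$ via the projection formula $\operatorname{deg}(x_G \cdot y) = \operatorname{deg}(\varphi_G(y))$ (Lemma~\ref{prop:projection}). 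Iterating these projections splits the degree computation across a tensor product of small Chow rings of rank-one intervals, where Propositions~\ref{prop:deg1} and~\ref{prop:degvanishing} can read off $\operatorname{deg}(m \cdot d(m)) = 1$ and the vanishing/order condition directly. In the rank-$2$ Boolean example, $d(h_{\{1\}}) = x_{\{1\}} = h_{\{1,2\}} - h_{\{1\}}$, and indeed $\operatorname{deg}(h_{\{1\}}(h_{\{1,2\}} - h_{\{1\}})) = 1 - 0 = 1$. Your argument needs this replacement (or some equivalent device for producing dual classes that do not live in the monomial basis) before the triangularity claim, and hence (1) linear independence and (3), can go through.
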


We give elementary and non-inductive proofs of Theorems \ref{thm:nonaug} and \ref{thm:aug}. We use only the above definition of a matroid and basic linear algebra. 
We now discuss the history of the above results. 

Theorem \ref{SMnonaug} and \ref{SM} give \emph{standard monomial} bases for (augmented) Chow rings of matroids. A Gr\"{o}bner basis for $\underline{A}^{\bullet}(\mathrm{M})$ was given in \cite{FY}, and this gives a monomial basis for $\underline{A}^{\bullet}(\mathrm{M})$. In \cite{BES}*{Corollary 3.3.3}, it is shown that this monomial basis is essentially equivalent to the one given in Theorem \ref{SMnonaug}. Theorem \ref{SM} has not appeared explicitly in the literature before, but it is well-known to experts. 
Using the ``free coextension trick'', the result of \cite{FY} can be used to produce a Gr\"{o}bner basis for $A^{\bullet}(\mathrm{M})$ as well; see \cite{MatroidKoszul}*{Section 5}. After some further manipulations this yields Theorem \ref{SM}; see \cite{EHL}*{Theorem 7.7} for a special case. We note that Theorem~\ref{SMnonaug} can be easily used to prove that the Gr\"{o}bner basis given in \cite{FY} is indeed a Gr\"{o}bner basis. 

Theorem \ref{dHR} and \ref{HR} are known as the \emph{dragon Hall--Rado} and \emph{Hall--Rado} formula, respectively, after the Hall--Rado theorem in matroid theory \cite{Rado}. Theorem \ref{dHR} is a generalization of Postnikov's dragon marriage theorem \cite{Pos09}*{Theorem 9.3}, and it was proven in \cite{BES}*{Theorem 5.2.4} using an inductive argument based on \cite{SpeyerTropicalLinearSpace}*{Proposition 4.4}, which relies on a connection between $\underline A^{\bullet}(\mathrm{M})$ and the permutohedral toric variety. Theorem~\ref{HR} was proven in \cite{ELPoly}*{Theorem 1.3} using a polyhedral interpretation of $A^{\bullet}(\mathrm{M})$. The argument given there can be adapted to prove Theorem \ref{dHR}; see \cite{ELPoly}*{Remark 6.3}. See also \cite{EFLS}*{Corollary 4.8}. Even the existence of the isomorphism $\deg$, which is called the \emph{degree map}, is nontrivial. It can be constructed using a tropical interpretation of the  Chow ring, see \cite{AHK18}*{Definition 5.9}. 

Theorem \ref{PDnonaug} and \ref{PD} state that (augmented) Chow rings of matroids satisfy a version of \emph{Poincar\'{e} duality}. Theorem \ref{PDnonaug} was first proven in \cite{AHK18}*{Theorem 6.19} using an inductive argument. Different inductive proofs have been given in \cites{BHMPW20a,LerayModel}. Non-inductive arguments using Theorem \ref{SMnonaug} have been given in \cites{BES,DastidarRoss,PagariaPezzoli}. Theorem~\ref{PD} was proven in \cite{BHMPW20a}*{Theorem 1.3(4)} using an inductive argument. It can also be deduced from \cite{AHK18}*{Theorem 6.19}; see \cite{BHMPW20a}*{Remark 4.1}. 

There is a generalization of the Chow ring of a matroid to take into account a \emph{building set} on the lattice of flats. Yuzvinsky gave an analogue of Theorem~\ref{SMnonaug} and Theorem~\ref{PDnonaug} for Chow rings of realizable matroids at the \emph{minimal building set} \cite{YuzvinskyCohomologyBasis}. Yuzvinsky's argument requires significant effort to adapt it to all matroids. Feichtner and Yuzvinsky give a Gr\"{o}bner basis, and therefore a standard monomial basis, for the Chow ring of a matroid at any building set \cite{FY}. These Gr\"{o}bner basis arguments are further generalized in \cites{LerayModel,PagariaPezzoli}.

Besides Poincar\'{e} duality, (augmented) Chow rings of matroids satisfy the other parts of the \emph{K\"{a}hler package}: the Hard Lefschetz theorem and the Hodge--Riemann relations. At the moment, the only proofs of the full K\"{a}hler package rely on intricate inductions \cites{AHK18,BHMPW20a,PagariaPezzoli}.

\medskip
Our approach begins with the augmented Chow ring. We use a ``straightening'' procedure which allows us to rewrite any monomial in terms of the standard monomials. This implies that the standard monomials span $A^{\bullet}(\mathrm{M})$, and so $A^r(\mathrm{M})$ has dimension at most $1$. We then directly verify that $\operatorname{deg} \colon A^r(\mathrm{M}) \to \mathbb{Z}$ given in Theorem~\ref{HR} is well-defined and an isomorphism. Finally, we prove Poincar\'{e} duality and prove the linear independence of the standard monomials simultaneously by showing that a certain matrix is lower triangular. 
With some additional arguments, we can deduce Theorem~\ref{thm:nonaug} because $\underline A^{\bullet}(\mathrm{M})$ is a quotient of $A^{\bullet}(\mathrm{M})$. 

Our approach to Poincar\'{e} duality is closely related to the approach in \cite{BES}, which is in turn inspired by an argument of Hampe \cite{Ham17} in the case of Boolean matroids. However, there are significant differences. For example, the argument in \cite{BES} relies on Poincar\'{e} duality for Boolean matroids.

\medskip 
In Section~\ref{sec:2}, we prove Theorem~\ref{thm:aug} and then deduce Theorem~\ref{thm:nonaug} from it. In Section~\ref{sec:3}, we explain a consequences of our approach: the (augmented) Chow ring of a matroid has a direct sum decomposition indexed by $\overline{\mathcal{L}}_{\mathrm{M}}$. We use this to derive a new recursion for the Hilbert series of $A^{\bullet}(\mathrm{M})$ and $\underline{A}^{\bullet}(\mathrm{M})$. This decomposition also gives different proof of Theorem~\ref{SM}. In Section~\ref{ref:sec4}, we construct an \emph{algebra with straightening law} related to the Chow ring of a matroid. We use this to give another proof of Theorem~\ref{SMnonaug}. 

\subsection*{Notation} Throughout, $\mathrm{M}$ will be a matroid of rank $r$. When we consider a monomial $h_{F_1}^{a_1} \dotsb h_{F_k}^{a_k}$ in $A^{\bullet}(\mathrm{M})$ or $\underline{A}^{\bullet}(\mathrm{M})$, we always assume the $a_i$ are nonzero, but we allow $k=0$. We do not assume the $F_i$ are distinct unless otherwise stated.

\subsection*{Acknowledgements}
We thank June Huh for helpful conversations about the results in Section~\ref{sec:3}. We thank Luis Ferroni and Vic Reiner for helpful comments on an earlier version of this paper, and we thank Darij Grinberg and the referee for many detailed suggestions. We thank Aldo Conca for explaining the proof of Lemma~\ref{lem:nonzerodivisor} to us. The author is supported by an ARCS fellowship. 

\section{Proof of Theorem~\ref{thm:nonaug} and Theorem~\ref{thm:aug}}\label{sec:2}

\subsection{Straightening monomials}

We begin by using a straightening procedure to prove that the standard monomials, i.e., the elements in Theorem~\ref{SM}, span $A^{\bullet}(\mathrm{M})$. We then use this to prove Theorem~\ref{HR}.

\begin{proposition}\label{prop:SMspanning}
The monomials
\begin{equation*}
\{h_{F_1}^{a_1} \dotsb h_{F_\ell}^{a_\ell} : \emptyset = F_0 < F_1 < \dotsb < F_\ell, \text{ } a_1 \le \operatorname{rk}(F_1), \text{ }  a_i < \operatorname{rk}(F_i) - \operatorname{rk}(F_{i-1}) \text{ for }i=2, \dotsc, \ell\}
\end{equation*}
integrally span ${A}^{\bullet}(\mathrm{M})$.
\end{proposition}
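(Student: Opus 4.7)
My plan is a straightening procedure driven by induction on the ``rank sum'' $\sigma(m) = \sum_i a_i \operatorname{rk}(F_i)$ of a monomial $m = h_{F_1}^{a_1}\cdots h_{F_k}^{a_k}$. More precisely, I induct on the nonnegative integer $(\deg m)\cdot r - \sigma(m)$; every reduction I describe below strictly decreases this quantity, so finitely many reductions suffice to express $m$ as a $\mathbb{Z}$-linear combination of standard monomials.

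For the non-chain case, if $\operatorname{supp}(m)$ contains incomparable flats $F_i, F_j$, I apply the defining quadratic relation
$$h_{F_i} h_{F_j} = h_{F_i} h_{F_i \vee F_j} + h_{F_j} h_{F_i \vee F_j} - h_{F_i \vee F_j}^2,$$
obtained by expanding $(h_{F_i} - h_{F_i \vee F_j})(h_{F_j} - h_{F_i \vee F_j}) = 0$. Since $F_i, F_j$ are incomparable, $\operatorname{rk}(F_i \vee F_j) > \max(\operatorname{rk}(F_i), \operatorname{rk}(F_j))$, so each of the three resulting monomials has strictly larger $\sigma$.

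For the chain case, I first derive the key identity $h_F^2 = h_G h_F$ whenever $G < F$ and $\operatorname{rk}(G) = \operatorname{rk}(F) - 1$. This comes from choosing any atom $a \leq F$ with $a \not\leq G$ (such an $a$ exists by atomicity of $\mathcal{L}_{\mathrm{M}}$, and then $a \vee G = F$ by rank considerations), applying the quadratic relation $(h_a - h_F)(h_G - h_F) = 0$, and simplifying with the atom relation $h_a h_G = h_a h_{G\vee a} = h_a h_F$. Iterating yields $h_F^{k+1} = h_G^k h_F$, and an induction on $\operatorname{rk}(F)$ (base case: $F$ an atom with $h_F^2 = 0$) gives $h_F^{\operatorname{rk}(F)+1} = 0$. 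A further induction along a saturated chain $G = G_0 < G_1 < \cdots < G_d = F$ promotes this to $h_G h_F^e = h_F^{e+1}$ for any $e \geq d = \operatorname{rk}(F) - \operatorname{rk}(G)$. Applied to a chain monomial whose exponent bound is violated at some position $i \geq 2$, this identity rewrites
$$h_{F_{i-1}}^{a_{i-1}} h_{F_i}^{a_i} = h_{F_{i-1}}^{a_{i-1}-1} h_{F_i}^{a_i + 1},$$
again strictly increasing $\sigma$; if $a_1 > \operatorname{rk}(F_1)$, the monomial vanishes outright by the rank-induction result.

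The main obstacle is the chain case: the codimension-one identity $h_F^2 = h_G h_F$ is short, but upgrading it to the absorption identity $h_G h_F^e = h_F^{e+1}$ at arbitrary codimension demands a careful saturated-chain induction that threads together the codimension-one case with the commutation of $h_G, h_F$, and one must then verify that the cascade of exponent-absorbing rewrites (which may shrink the support by removing $F_{i-1}$ and thereby create a fresh bound violation higher up the chain) terminates at a standard monomial or zero rather than looping. Termination itself is automatic from the monotonicity of $\sigma$ and the bound $\sigma(m) \leq (\deg m)\cdot r$.
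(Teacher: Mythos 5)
Your proposal is correct, and it uses the same two ingredients as the paper: the quadratic straightening relation $h_{F_i}h_{F_j}=h_{F_i}h_{F_i\vee F_j}+h_{F_j}h_{F_i\vee F_j}-h_{F_i\vee F_j}^2$ to eliminate incomparable pairs, and the cover identity $h_Fh_G=h_G^2$ (the paper's Lemma~\ref{lemma:dropdown}) to absorb out-of-bounds exponents along a chain and to prove the vanishing $h_F^{\operatorname{rk}(F)+1}=0$. The one genuine point of departure is the termination argument: you introduce a single global potential $\sigma(m)=\sum_i a_i\operatorname{rk}(F_i)$, bounded above by $(\deg m)\cdot r$, which strictly increases under every rewrite, and this lets you interleave the two kinds of reduction and apply the quadratic relation to any incomparable pair. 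The paper instead argues in two separate passes, first reducing to chain monomials by repeatedly picking a pair of distinct maximal flats (decreasing the number of such flats, then recursing below the unique maximum), and then collapsing chain monomials via $h_{F_{i-1}}^{a_{i-1}}h_{F_i}^{a_i}=h_{F_i}^{a_{i-1}+a_i}$, which decreases the chain length. Your absorption lemma $h_Gh_F^e=h_F^{e+1}$ for $e\ge\operatorname{rk}(F)-\operatorname{rk}(G)$ is exactly the one-step-at-a-time version of that collapse; both are correct, but your measure $\sigma$ unifies the bookkeeping and removes the need to single out maximal elements.
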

\noindent
We prepare by proving three lemmas. 

\begin{lemma}\label{lemma:chain}
The monomials
\begin{equation*}
\{h_{F_1}^{a_1} \dotsb h_{F_\ell}^{a_\ell} : \emptyset = F_0 < F_1 < \dotsb < F_\ell\}
\end{equation*}
integrally span ${A}^{\bullet}(\mathrm{M})$.
\end{lemma}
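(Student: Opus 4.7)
The plan is to use only the first family of relations in the presentation of $A^{\bullet}(\mathrm{M})$, namely $(h_F - h_{F \vee G})(h_G - h_{F \vee G}) = 0$, which expands to the ``straightening identity''
\[
h_F h_G = h_F h_{F \vee G} + h_G h_{F \vee G} - h_{F \vee G}^2.
\]
This identity holds for all $F, G$, but is informative only when $F, G$ are incomparable: in that case $F \vee G$ is strictly larger than both $F$ and $G$ in $\mathcal{L}_{\mathrm{M}}$. The atom relations $h_a^2 = 0$ and $h_a h_F = h_a h_{F \vee a}$ are not needed for this lemma.

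Given an arbitrary monomial $m = h_{F_1} \dotsb h_{F_d}$ with no assumption on the $F_i$, first check whether the distinct flats among $F_1, \dotsc, F_d$ form a chain. If they do, $m$ is of the desired form. Otherwise, pick two incomparable flats $F_i, F_j$ among them and apply the straightening identity to the factors $h_{F_i} h_{F_j}$, expressing $m$ as an integer combination of three monomials of the same degree. Iterate this on each resulting monomial that is not already a chain monomial.

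The main content is showing that the recursion terminates. I would use the rank sum $\sigma(m) := \sum_{i=1}^d \operatorname{rk}(F_i)$ as the complexity measure. Because $\mathcal{L}_{\mathrm{M}}$ is a ranked lattice, $F < G$ implies $\operatorname{rk}(F) < \operatorname{rk}(G)$; hence when $F, G$ are incomparable, $\operatorname{rk}(F \vee G) > \max\{\operatorname{rk}(F), \operatorname{rk}(G)\}$, and each of the three monomials on the right-hand side of the straightening identity has strictly larger rank sum than the one it replaced. Since $\sigma(m) \le r d$ is bounded above, the recursion terminates, producing the desired integer linear combination of chain monomials; integrality is automatic because the straightening identity has coefficients $\pm 1$. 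The only real obstacle is isolating the correct complexity measure; once that is done, the argument reduces to routine bookkeeping.
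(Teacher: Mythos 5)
Your proof is correct, and it takes a genuinely different route from the paper's. Both proofs use the same straightening relation $h_{F}h_{G} = h_{F}h_{F \vee G} + h_{G}h_{F \vee G} - h_{F \vee G}^2$ to eliminate pairs of incomparable flats, but they differ in how they establish termination. The paper structures the rewriting in two nested phases: it first repeatedly reduces the number of distinct maximal flats in the support until there is a unique maximal element $M$, then recurses on the remaining flats, noting that $M$ remains dominant because $M \ge G_i \vee G_j$ for any pair below it. Your approach instead isolates a single monovariant, the rank sum $\sigma(m) = \sum_i \operatorname{rk}(F_i)$, and observes that each application of the straightening relation strictly increases $\sigma$ on all three resulting monomials (since $F, G$ incomparable forces $\operatorname{rk}(F \vee G) > \max\{\operatorname{rk}(F), \operatorname{rk}(G)\}$ in a ranked lattice), while $\sigma$ is bounded above by $r \cdot \deg(m)$. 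This replaces the paper's two-phase recursion with a single well-founded termination measure, which is arguably cleaner and more directly generalizable; the paper's phased argument, on the other hand, makes the structure of the eventual chain more visible. Both proofs correctly avoid the atom relations and both yield integrality for free from the $\pm 1$ coefficients.
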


\begin{proof}
It suffices to write each monomial of the form $m = h_{G_1}^{b_1} \dotsb h_{G_{\ell}}^{b_{\ell}}$ in terms of the monomials where the flats used form a chain. Suppose that $G_i$ and $G_j$ are distinct and are both maximal in $\{G_1, \dotsc, G_{\ell}\}$. Then we can use the relation
\begin{equation}\label{eq:straighten}
h_{G_i} h_{G_j} = h_{G_i}h_{G_i \vee G_j} + h_{G_j}h_{G_i \vee G_j} - h_{G_i \vee G_j}^2
\end{equation}
to write $m$ as a sum of monomials where there are fewer distinct maximal elements in the set of flats used in each monomial. Repeating this, we can write $m$ as a sum of monomials where, in each monomial, the set of flats used has a unique maximal element. 

We can therefore assume that $G_{\ell}$ is maximal. If $G_i$ and $G_j$ are distinct maximal elements in $\{G_1, \dotsc, G_{\ell - 1}\}$, then we use the relation \eqref{eq:straighten}. As $G_{\ell} \ge G_i \vee G_j$, $G_{\ell}$ will still be maximal in each of the resulting terms. Repeating this argument gives the desired result. 
\end{proof}

\begin{lemma}\label{lemma:dropdown}
Suppose that $G$ covers $F$ in $\overline{\mathcal{L}}_{\mathrm{M}}$, i.e., $F \le G$ and $\operatorname{rk}(G) = \operatorname{rk}(F) + 1$. Then $h_Fh_G = h_G^2$. 
\end{lemma}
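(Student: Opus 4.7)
The plan is to exploit the extra relations that the augmented Chow ring has compared to the ordinary Chow ring — namely $h_a h_F = h_a h_{F \vee a}$ for atoms $a$ — by carefully choosing an atom $a$ adapted to the covering pair $F \lessdot G$.

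First I would observe that, because $\mathcal{L}_{\mathrm{M}}$ is atomic and $F < G$, the flat $G$ cannot be the join of only those atoms it shares with $F$ (otherwise $G \le F$). So I can pick an atom $a \in \overline{\mathcal{L}}_{\mathrm{M}}$ with $a \le G$ and $a \not\le F$. Then $F < F \vee a \le G$, and since $G$ covers $F$ this forces $F \vee a = G$.

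Next I would apply the defining quadratic relation of $A^{\bullet}(\mathrm{M})$ to the pair $(F, a)$. Since $F \vee a = G$, this relation reads
\[
(h_F - h_G)(h_a - h_G) = 0,
\]
which expands to
\[
h_F h_a - h_F h_G - h_a h_G + h_G^2 = 0.
\]
Now I use the augmented relation $h_a h_F = h_a h_{F \vee a} = h_a h_G$ to cancel $h_F h_a$ with $h_a h_G$, leaving $h_F h_G = h_G^2$, as desired.

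There is no real obstacle here; the only subtle point is the existence of the atom $a$, which uses atomicity of $\mathcal{L}_{\mathrm{M}}$ together with the hypothesis $F < G$. The use of the $h_a h_F = h_a h_{F \vee a}$ relation is precisely what distinguishes the augmented Chow ring and makes the lemma hold in $A^{\bullet}(\mathrm{M})$ (note this identity need not hold in $\underline{A}^{\bullet}(\mathrm{M})$, reflecting the different shape of the standard monomial basis in the non-augmented case).
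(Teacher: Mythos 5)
Your proof is correct and takes the same approach as the paper: choose an atom $a$ with $F \vee a = G$ (using atomicity), then combine the quadratic relation $(h_F - h_G)(h_a - h_G) = 0$ with the augmented relation $h_a h_F = h_a h_G$. You simply spell out the steps (existence of the atom, the cancellation) in more detail than the paper does.
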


\begin{proof}
Because $\mathcal{L}_{\mathrm{M}}$ is atomic, there is an atom $a$ such that $G = F \vee a$. By the defining relations in $A^{\bullet}(\mathrm{M})$, we have that $(h_a - h_G)(h_F - h_G) = 0$ and  $h_ah_F = h_ah_G$. The result follows. 
\end{proof}

\begin{proof}[Proof of Proposition~\ref{prop:SMspanning}]{}
By Lemma~\ref{lemma:chain}, it suffices to show that each monomial $m = h_{F_1}^{a_1} \dotsb h_{F_\ell}^{a_\ell}$, where $\emptyset = F_0 < F_1 < \dotsb < F_\ell$, is either equal to a standard monomial or vanishes. If $m$ is not standard, then either $a_1 > \operatorname{rk}(F_1)$ or $a_i \ge \operatorname{rk}(F_i) - \operatorname{rk}(F_{i-1})$ for some $i \ge 2$. 

Suppose $a_1 > \operatorname{rk}(F_1)$. Choose a chain of covers $\emptyset = G_0 < G_1 < \dotsb < G_{\operatorname{rk}(F_1)} = F_1$. Applying Lemma~\ref{lemma:dropdown} repeatedly, we have that
$$h_{F_1}^{a_1} = h_{G_{\operatorname{rk}(F_1) - 1}}^{a_1 - 1} h_{F_1} = \dotsb = h_{G_1}^{a_1 - \operatorname{rk}(F_1) + 1} h_{G_2} \dotsb h_{F_1}.$$
As $G_1$ is an atom and $a_1 - \operatorname{rk}(F_1) + 1 \ge 2$, we see that $m = 0$ in this case. 

Suppose $a_i \ge \operatorname{rk}(F_i) - \operatorname{rk}(F_{i-1})$ for some $i \ge 2$. Choose a chain of covers $F_{i-1} = G_0 < G_1 < \dotsb < G_{\operatorname{rk}(F_i) - \operatorname{rk}(F_{i-1})} = F_{i}$. Applying Lemma~\ref{lemma:dropdown} repeatedly, we have that
\begin{equation*}\begin{split}
h_{F_{i-1}}^{a_{i-1}} h_{F_i}^{a_i} &=  h_{F_{i-1}}^{a_{i-1}} h_{G_1}^{a_i - \operatorname{rk}(F_i) - \operatorname{rk}(F_{i-1}) + 1} h_{G_2} \dotsb h_{G_{\operatorname{rk}(F_i) - \operatorname{rk}(F_{i-1}) - 1}} h_{F_i} = h_{F_i}^{a_{i-1} + a_{i}}.
\end{split}\end{equation*}
This rewriting decreases the number of flats in the chain. Applying these two operations shows that $m$ is either equal to a standard monomial or vanishes. 
\end{proof}

We say that a multiset $\{F_1, \dotsc, F_r\}$ of flats satisfies the \emph{Hall--Rado} condition if, for all $T \subseteq [r],$ $\operatorname{rk}(\bigvee_{i \in T} F_i) \ge |T|$. 
We say that $T$ \emph{witnesses} the failure of the Hall--Rado condition if $\operatorname{rk}(\bigvee_{i \in T} F_i) < |T|$. 

\begin{proof}[Proof of Theorem~\ref{HR}]
By Proposition~\ref{prop:SMspanning}, $A^r(\mathrm{M})$ is spanned by $h_{E}^r$. Note that $\{E, \dotsc, E\}$ satisfies the Hall--Rado condition, so if $\operatorname{deg}$ is well-defined then it is an isomorphism. We construct $\deg$ by defining a linear map from the degree $r$ part of $\mathbb{Z}[h_F]_{F \in \overline{\mathcal{L}}_{\mathrm{M}}}$ to $\mathbb{Z}$ using the formula in Theorem~\ref{HR} and showing that it descends to $A^{\bullet}(\mathrm{M})$. 
It therefore suffices to prove that if $m = h_{F_1} \dotsb h_{F_{r-2}}$ is a monomial in the degree $r-2$ part of $\mathbb{Z}[h_F]_{F \in \overline{\mathcal{L}}_{\mathrm{M}}}$, then the degree vanishes if we multiply $m$ by any of the defining relations of $A^{\bullet}(\mathrm{M})$. 

\smallskip

We first do the relation $h_a^2 = 0$, for $a$ an atom. Set $F_{r-1} = F_{r} = a$. Then $\{F_1, \dotsc, F_r\}$ does not satisfy the Hall--Rado condition because $1 = \operatorname{rk}(F_{r-1} \vee F_{r}) < 2$. 

\smallskip

We now do the relation $h_ah_F - h_ah_{F \vee a} = 0$, for $a$ an atom and $F \in \overline{\mathcal{L}}_{\mathrm{M}}$. Set $F_{r-1} = a, F_{r} = F,$ and $F_r' = F \vee a$. We need to show that $\{F_1, \dotsc, F_r\}$ satisfies the Hall--Rado condition if and only if $\{F_1, \dotsc, F_{r-1}, F_r'\}$ does. If $\{F_1, \dotsc, F_r\}$ satisfies the Hall--Rado condition, then so does $\{F_1, \dotsc, F_{r-1}, F_r'\}$ because $F_r' \ge F_r$. Suppose $\{F_1, \dotsc, F_r\}$ fails the Hall--Rado condition, so there is some $T \subseteq [r]$ such that $\operatorname{rk}(\bigvee_{i \in T} F_i) < |T|$. We see that $T$ witnesses that $\{F_1, \dotsc, F_r'\}$ also fails the Hall--Rado condition unless $r \in T$ and 
$$\operatorname{rk}(a \vee \bigvee_{i \in T} F_i) = |T|   \text{ and } \operatorname{rk}(a \vee \bigvee_{i \in T} F_i) = \operatorname{rk}(\bigvee_{i \in T} F_i) + 1.$$
In this case, taking $T' = T \cup \{r-1\}$ shows that $\{F_1, \dotsc, F_{r}'\}$ also fails the Hall--Rado condition. 

\smallskip

Finally, we do the relation $(h_{F_{r-1}} - h_{F_{r-1} \vee F_r})(h_{F_{r}} - h_{F_{r-1} \vee F_r}) = 0$, for $F_{r-1}, F_r \in \overline{\mathcal{L}}_{\mathrm{M}}$. Set $S_0 = \{F_1, \dotsc, F_r\}$, $S_1 = \{F_1, \dotsc, F_{r-1}, F_{r-1} \vee F_r\}$, $S_2 = \{F_1, \dotsc, F_{r-1} \vee F_r, F_r\}$, and $S_3 = \{F_1, \dotsc, F_{r-1} \vee F_r, F_{r-1} \vee F_r\}$. If $S_0$ satisfies the Hall--Rado condition, then so do $S_1, S_2$, and $S_3$. Similarly, if $S_1$ or $S_2$ satisfies the Hall--Rado condition, then so does $S_3$.  
There are then two cases which we must prove are impossible.

\textbf{Case 1}: $S_0$ fails the Hall--Rado condition, and $S_1, S_2, S_3$ satisfy the Hall--Rado condition. \\ 
Let $T \subseteq [r]$ witness the failure of the Hall--Rado condition for $S_0$. If $r- 1 \not \in T$, then $T$ witnesses the failure of the Hall--Rado condition for $S_2$. If $r \not \in T$, then $T$ witnesses the failure of the Hall--Rado condition for $S_1$. But if $\{r-1, r\} \subseteq T$, then $T$ witnesses the failure of the Hall--Rado condition for $S_3$. 

\textbf{Case 2}: $S_0, S_1, S_2$ fail the Hall--Rado condition, and $S_3$ satisfies the Hall--Rado condition. \\
Let $T_1 \subseteq [r]$ witness that $S_1$ fails the Hall--Rado condition. We must have $r-1 \in T_1$ and $r \not \in T_1$, as otherwise it would contradict our hypothesis. We can also assume that $T_1 \setminus \{r-1\}$ does not witness the failure of the Hall--Rado condition for $S_3$, so we must have $\operatorname{rk}(\bigvee_{i \in T_1} F_i) = |T_1| - 1$. 
Similarly, we can find $T_2 \subseteq [r]$ with $r \in T_2, r - 1 \not \in T_2$, and $\operatorname{rk}(\bigvee_{i \in T_2 } F_i) = |T_2| - 1$. 
By the submodularity of the rank function, we have that
\begin{equation}{\label{eq:submodular}}
\operatorname{rk}\left ((\bigvee_{i \in T_1} F_i) \wedge (\bigvee_{i \in T_2} F_i)\right ) + \operatorname{rk}\left (\bigvee_{i \in T_1 \cup T_2} F_i\right ) \le \operatorname{rk}\left (\bigvee_{i \in T_1} F_i\right ) + \operatorname{rk}\left (\bigvee_{i \in T_2} F_i \right).
\end{equation}
Set $H = \bigvee_{i \in T_1 \cap T_2} F_i$, so $H \le (\bigvee_{i \in T_1} F_i) \wedge (\bigvee_{i \in T_2} F_i)$.
We may assume that $\operatorname{rk}(H) \ge |T_1 \cap T_2|$, as otherwise $T_1 \cap T_2$ witnesses the failure of the Hall--Rado condition for $S_3$. By \eqref{eq:submodular}, we get that $\operatorname{rk}(\bigvee_{i \in T_1 \cup T_2} F_i) \le |T_1 \cup T_2| -2$. But then $T_1 \cup T_2$ witnesses the failure of the Hall--Rado condition for $S_3$. 
\end{proof}

\subsection{Maps between Chow rings}

For the proof of Theorem~\ref{PD} and \ref{SM}, we will use some maps considered in \cite{BHMPW20a}*{Section 2.6}. 
For a matroid $\mathrm{M}$ of rank $r$ and a flat $G \in \mathcal{L}_{\mathrm{M}}$, let $\mathrm{M}^G$ be the matroid whose lattice of flats is the interval $[\emptyset, G]$, and let $\mathrm{M}_G$ be the matroid whose lattice of flats is the interval $[G, E]$. It is easily checked that these are indeed matroids. 
We will use $G$ to denote the minimal element of $\mathrm{M}_G$ and the maximal element of $\mathrm{M}^G$. 

We say that a flat is \emph{nonempty} if it is not minimal and that it is \emph{proper} if it is not maximal. 
Choose a proper flat $G$ of $\mathrm{M}$. Let $\mathcal{A}$ denote the set of atoms of $\mathrm{M}$ which are not contained in $G$. Set $h_{\emptyset} = 0$, and for a subset $S$ of the atoms of $\mathrm{M}$, we set $\bigvee S = \bigvee_{a \in S} a$. Let
\begin{equation}\label{eq:xgh}
x_G = -\sum_{S \subseteq \mathcal{A}} (-1)^{|S|} h_{G \vee \bigvee S} \in A^{\bullet}(\mathrm{M}).
\end{equation}
Similarly, if $G$ is a proper nonempty flat, we set $x_G = -\sum_{S \subseteq \mathcal{A}} (-1)^{|S|} h_{G \vee \bigvee S} \in \underline{A}^{\bullet}(\mathrm{M}).$ We will always make clear whether we think of $x_G$ as living in $A^{\bullet}(\mathrm{M})$ or $\underline{A}^{\bullet}(\mathrm{M})$.

\begin{lemma}\label{lem:phiG}
Let $G$ be a proper flat of $\mathrm{M}$. There is a surjective ring homomorphism $ \varphi_G \colon A^{\bullet}(\mathrm{M}) \to A^{\bullet}(\mathrm{M}^G) \otimes \underline{A}^{\bullet}(\mathrm{M}_G)$ given by $\varphi_G(h_F) = h_F \otimes 1$ if $F \le G$, and $\varphi_G(h_F) = 1 \otimes h_{F \vee G}$ otherwise. The kernel of $\varphi_G$ is 
$$(h_{F} : F \text{ covers }G) + (h_H - h_{H \vee G} : H \not \le G).$$
\end{lemma}

When $G = \emptyset$, we interpret $A^{\bullet}(\mathrm{M}^{\emptyset})$ as $\mathbb{Z}$, so $\varphi_{\emptyset}$ maps $A^{\bullet}(\mathrm{M})$ to $\underline{A}^{\bullet}(\mathrm{M})$. 

\begin{proof}[Proof of Lemma~\ref{lem:phiG}]
Note that $A^{\bullet}(\mathrm{M})$ is a quotient of $\mathbb{Z}[h_F]_{F \in \overline{\mathcal{L}}_{\mathrm{M}}}$. When we impose the second set of relations in the above ideal, we obtain $\mathbb{Z}[h_F]_{F \in \overline{\mathcal{L}}_{\mathrm{M}}, \, F \le G \text{ or } F>G}$, via the map that sends $h_H$ to $h_{H \vee G}$ if $H \not \le G$. Note that $A^{\bullet}(\mathrm{M}^G) \otimes \underline{A}^{\bullet}(\mathrm{M}_G)$ is a quotient of this ring, and the image of the ideal defining $A^{\bullet}(\mathrm{M})$ is the ideal defining $A^{\bullet}(\mathrm{M}^G) \otimes \underline{A}^{\bullet}(\mathrm{M}_G)$.
\end{proof}

Similarly, we have the following lemma, whose proof is identical to the proof of Lemma~\ref{lem:phiG}.

\begin{lemma}\label{lem:phiGnonaug}
Let $G$ be a proper nonempty flat of $\mathrm{M}$. There is a surjective ring homomorphism $ \varphi_G \colon \underline{A}^{\bullet}(\mathrm{M}) \to \underline{A}^{\bullet}(\mathrm{M}^G) \otimes \underline{A}^{\bullet}(\mathrm{M}_G)$ given by $\varphi_G(h_F) = h_F \otimes 1$ if $F \le G$, and $\varphi_G(h_F) = 1 \otimes h_{F \vee G}$ otherwise. The kernel of $\varphi_G$ is 
$$(h_{F} : F \text{ covers }G) + (h_H - h_{H \vee G} :  H \not \le G).$$
\end{lemma}

Note that, by construction, the following diagram commutes. 
\begin{center}
\begin{tikzcd}
A^{\bullet}(\mathrm{M}) \arrow[r, "\phi_G"] \arrow[d, "\varphi_{\emptyset}"]
& A^{\bullet}(\mathrm{M}^G) \otimes \underline{A}^{\bullet}(M_G) \arrow[d, "\varphi_{\emptyset} \otimes 1"] \\
\underline{A}^{\bullet}(\mathrm{M}) \arrow[r, "\varphi_G"]
& \underline{A}^{\bullet}(\mathrm{M}^G) \otimes \underline{A}^{\bullet}(M_G)
\end{tikzcd}
\end{center}
This allows us to reduce several computations to the augmented case. 
The following lemma will be crucial to our subsequent results. 

\begin{lemma}\label{lem:restrictxaug}
Let $G$ be a proper flat, let $H > G$, and consider $x_H \in A^{\bullet}(\mathrm{M})$. Then $\varphi_G(x_H) = 1 \otimes x_H \in A^{\bullet}(\mathrm{M}^G) \otimes \underline{A}^{\bullet}(\mathrm{M}_G)$. 
\end{lemma}

\begin{proof}
Let $\mathcal{A}$ be the set of atoms of $\mathrm{M}$ not contained in $H$, and let $\mathcal{A}'$ be the set of atoms of $\mathrm{M}_G$ not contained in $H$. There is a surjective map $p \colon \mathcal{A} \to \mathcal{A}'$ given by $a \mapsto G \vee a$. 
Note that for any $T \subset \mathcal{A}$ with $p(T) = S$, we have 
$$1 \otimes h_{H \vee \bigvee S} = \varphi_G(h_{H \vee \bigvee T}).$$
Counting the number of terms $h_{H \vee \bigvee T}$ in the definition \eqref{eq:xgh} of $x_G$ which are mapped to $1 \otimes h_{H \vee \bigvee S}$, it suffices to show that
$$(-1)^{|S|} = \sum_{p(T) = S} (-1)^{|T|}.$$
Let $n_1, \dotsc, n_{|S|}$ be the sizes of the sets $p^{-1}(a)$ for $a \in S$. Note that each $n_i$ is positive.  Then the right-hand side is
\begin{equation*}
((-1 + 1)^{n_1} - 1) \cdot ((-1 + 1)^{n_2} -1) \cdot \ldots \cdot ((-1 + 1)^{n_{|S|}} - 1) = (-1)^{|S|}. \qedhere
\end{equation*}
\end{proof}

The non-augmented version of the previous lemma can be proved in the same way, or it can be deduced by applying $\varphi_{\emptyset}$. 

\begin{lemma}\label{lem:restrictx}
Let $G$ be a proper flat, let $H > G$, and consider $x_H \in \underline{A}^{\bullet}(\mathrm{M})$. Then $\varphi_G(x_H) = 1 \otimes x_H \in \underline{A}^{\bullet}(\mathrm{M}^G) \otimes \underline{A}^{\bullet}(\mathrm{M}_G)$. 
\end{lemma}

\begin{remark}
One can additionally show that, if $H < G$, $\varphi_G(x_H) = x_H \otimes 1$, and that $\varphi_G(x_H) = 0$ if $H$ and $G$ are incomparable. See \cite{BHMPW20a}*{Proposition 2.17}.
\end{remark}

\begin{lemma}\label{lem:cover}
Let $F, G \in \overline{\mathcal{L}}_{\mathrm{M}}$, and suppose that $G$ covers $F \wedge G$. Then $h_Gh_F = h_Gh_{G \vee F}$ in $A^{\bullet}(\mathrm{M})$.
\end{lemma}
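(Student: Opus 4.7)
The plan is to reduce to a case where Lemma~\ref{lemma:dropdown} applies, and then combine it with the defining quadratic relation of $A^{\bullet}(\mathrm{M})$.

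First, I would dispose of the trivial case $G \le F$: then $G \vee F = F$ and the identity $h_G h_F = h_G h_{G \vee F}$ holds tautologically. So assume $G \not\le F$, which forces $G \vee F > F$, hence $\operatorname{rk}(G \vee F) \ge \operatorname{rk}(F) + 1$.

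The key observation is that $G \vee F$ in fact \emph{covers} $F$. Indeed, by submodularity of the rank function,
\[
\operatorname{rk}(G \vee F) \le \operatorname{rk}(G) + \operatorname{rk}(F) - \operatorname{rk}(G \wedge F) = \operatorname{rk}(F) + 1,
\]
where the equality uses the hypothesis that $G$ covers $F \wedge G$. Combined with the lower bound above, $\operatorname{rk}(G \vee F) = \operatorname{rk}(F) + 1$, so $G \vee F$ covers $F$ in $\overline{\mathcal{L}}_{\mathrm{M}}$ (both flats are nonempty since $F \in \overline{\mathcal{L}}_{\mathrm{M}}$).

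Now I would apply Lemma~\ref{lemma:dropdown} to the covering pair $F < G \vee F$ to get $h_F h_{G \vee F} = h_{G \vee F}^2$. Finally, expanding the defining quadratic relation $(h_F - h_{G \vee F})(h_G - h_{G \vee F}) = 0$ gives
\[
h_F h_G = h_F h_{G \vee F} + h_G h_{G \vee F} - h_{G \vee F}^2 = h_G h_{G \vee F},
\]
where the last equality uses the relation just derived. There is no serious obstacle: the whole argument is a short computation once one notices the covering property of $G \vee F$ over $F$, and that observation is immediate from submodularity.
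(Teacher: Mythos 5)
Your proof is correct and follows essentially the same route as the paper: both reduce to the quadratic relation $(h_F - h_{G\vee F})(h_G - h_{G\vee F}) = 0$ together with the identity $h_F h_{G\vee F} = h_{G\vee F}^2$ from Lemma~\ref{lemma:dropdown}. The only cosmetic difference is that you justify applicability of Lemma~\ref{lemma:dropdown} by using submodularity to show $G \vee F$ covers $F$, whereas the paper invokes atomicity to write $G = (F \wedge G) \vee a$ and hence $G \vee F = F \vee a$ for an atom $a$; the two are interchangeable here.
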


\begin{proof}
Because $G$ covers $F \wedge G$, there is an atom $a$ such that $G = (F \wedge G) \vee a$. Then $G \vee F = F \vee a$, so Lemma~\ref{lemma:dropdown} gives
$$h_{F}h_{G \vee F} = h_{F} h_{F \vee a} = h_{F \vee a}^2 = h_{G \vee F}^2. $$
The result follows by using that $h_Fh_{G \vee F} = h_{G \vee F}^2$ in the equation
\begin{equation*}
h_Gh_F = h_Gh_{G \vee F} + h_Fh_{G \vee F} - h_{G \vee F}^2.\qedhere
\end{equation*}
\end{proof}

\begin{lemma}\label{lem:annihilator}
Let $G$ be a proper flat. The kernel of $\varphi_G \colon A^{\bullet}(\mathrm{M}) \to A^{\bullet}(\mathrm{M}^G) \otimes \underline{A}^{\bullet}(\mathrm{M}_G)$ is contained in the annihilator $\operatorname{ann}(x_G)$. Similarly, if $G$ is a proper nonempty flat, the kernel of $\varphi_G  \colon \underline{A}^{\bullet}(\mathrm{M}) \to \underline{A}^{\bullet}(\mathrm{M}^G) \otimes \underline{A}^{\bullet}(\mathrm{M}_G)$ is contained in $\operatorname{ann}(x_G)$.
\end{lemma}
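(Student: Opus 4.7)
The plan is to check that each generator of $\ker\varphi_G$ (as described in Lemma~\ref{lem:phiG} for the augmented case, and identically in Lemma~\ref{lem:phiGnonaug} for the non-augmented case) annihilates $x_G$; the two settings admit the same argument. Writing $L_S = G \vee \bigvee_{a \in S} a$, so that $x_G = -\sum_{S \subseteq \mathcal{A}} (-1)^{|S|} h_{L_S}$, the two relations to handle are (a) $h_F x_G = 0$ when $F$ covers $G$, and (b) $(h_H - h_K) x_G = 0$ when $H, K \not\le G$ and $H \vee G = K \vee G$.

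For (a), atomicity yields an atom $a_0 \in \mathcal{A}$ with $F = G \vee a_0$. I would pair each $S \not\ni a_0$ with $S \cup \{a_0\}$; since $L_{S \cup \{a_0\}} = F \vee L_S$, each pair contributes a scalar multiple of $h_F(h_{F \vee L_S} - h_{L_S})$ to $h_F x_G$. When $L_S \ge F$ this difference is zero, and otherwise $L_S \wedge F = G$ (because $F$ covers $G$ and $L_S \ge G$), so $F$ covers $L_S \wedge F$ and Lemma~\ref{lem:cover} gives $h_F h_{L_S} = h_F h_{F \vee L_S}$. Summing the pairs yields $h_F x_G = 0$.

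For (b), set $J = H \vee G = K \vee G$. Applying the straightening relation $h_X h_Y = h_X h_{X \vee Y} + h_Y h_{X \vee Y} - h_{X \vee Y}^2$ with $Y = L_S$ first to $X = H$ and then to $X = K$, and subtracting, the terms involving only $L_S$ and $X \vee L_S$ cancel (since $H \vee L_S = K \vee L_S = J \vee \bigvee_{a \in S} a$), giving $(h_H - h_K)\, h_{L_S} = (h_H - h_K)\, h_{J \vee \bigvee_{a \in S} a}$. It therefore suffices to show $\sum_{S \subseteq \mathcal{A}} (-1)^{|S|} h_{J \vee \bigvee_{a \in S} a} = 0$. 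Splitting $\mathcal{A} = \mathcal{A}' \sqcup \mathcal{A}''$ with $\mathcal{A}' = \{a \in \mathcal{A} : a \le J\}$, the flat $J \vee \bigvee_{a \in S} a$ depends only on $S \cap \mathcal{A}''$, so the sum factors through $\sum_{S' \subseteq \mathcal{A}'} (-1)^{|S'|}$, which vanishes whenever $\mathcal{A}' \ne \emptyset$. The main obstacle is exactly this last nonemptiness: because $J > G$ and $\mathcal{L}_{\mathrm{M}}$ is atomic, $J$ is the join of its atoms and so must contain an atom not below $G$. This is where atomicity is used essentially, and once it is in place, the inclusion-exclusion collapse finishes the argument.
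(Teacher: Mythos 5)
Your proposal is correct and follows essentially the same strategy as the paper: in both parts you pair up the subsets $S$ and $S \cup \{a\}$ for a suitable atom $a$, invoke Lemma~\ref{lem:cover} for the first family of generators, and apply the straightening relation plus an atomicity argument for the second. The only cosmetic differences are that in part (b) you factor out $(h_H - h_K)$ before collapsing the sum and choose the atom below $J = H \vee G$ rather than below $H$; the paper instead treats the two sums $-\sum_S (-1)^{|S|} h_H h_{J \vee \bigvee_{b\in S} b}$ and $+\sum_S (-1)^{|S|} h_K h_{J \vee \bigvee_{b\in S} b}$ separately with an atom $a \le H$, $a \not\le G$, but the inclusion--exclusion collapse is the same.
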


\begin{proof}
We do the augmented case; the non-augmented case can be proved similarly or deduced by applying $\varphi_{\emptyset}$. We first show that $x_G \cdot h_F = 0$ if $F$ covers $G$, i.e., $F = G \vee a$ for some atom $a \not \in G$. Let $\mathcal{A}$ be the set of atoms not contained in $G$. Then
\begin{equation*}
x_G \cdot h_{G \vee a} = - \sum_{S \subseteq \mathcal{A}} (-1)^{|S|} h_{G \vee \bigvee S}h_{G \vee a}.
\end{equation*}
Suppose that $S$ does not contain $a$. 
If $(G \vee \bigvee S) \wedge (G \vee a) \not= G \vee a$, then $(G \vee \bigvee S) \wedge (G \vee a) = G$, so it is covered by $G \vee a$. In this case, Lemma~\ref{lem:cover} then gives that $h_{G \vee \bigvee S}h_{G \vee a} = h_{G \vee a \vee \bigvee S} h_{G \vee a}$. We see that, for any $S$ not containing $a$, we have
$$h_{G \vee \bigvee S}h_{G \vee a} = h_{G \vee \bigvee (S \cup a)}h_{G \vee a}.$$
Therefore the terms in the sum indexed by those $S$ which contain $a$ cancel with the terms indexed by those $S$ which do not contain $a$, and so the sum is $0$.

We now show that $x_G(h_H - h_{H \vee G}) = 0$ if $H \not \le G$. That is, we need to show that
\begin{equation}\label{eq:NTS}
x_G(h_H - h_{H \vee G}) = - \sum_{S \subseteq \mathcal{A}} (-1)^{|S|} h_{G \vee \bigvee S}h_{H} + \sum_{S \subseteq \mathcal{A}} (-1)^{|S|} h_{G \vee \bigvee S}h_{H \vee G}
\end{equation}
vanishes.
For $S \subseteq \mathcal{A}$, the relation $(h_F - h_{F \vee K})(h_K - h_{F \vee K}) = 0$ in $A^{\bullet}(\mathrm{M})$ implies that
$$h_{G \vee \bigvee S}h_H - h_{G \vee \bigvee S}h_{H \vee G} =  h_{H}h_{H \vee G \vee \bigvee S} - h_{H \vee G}h_{H \vee G \vee \bigvee S}$$
Substituting this into the right-hand side of \eqref{eq:NTS}, we get that
$$x_G (h_H - h_K) = -\sum_{S \subseteq \mathcal{A}}(-1)^{|S|} h_H h_{H \vee G \vee \bigvee S} + \sum_{S \subseteq \mathcal{A}}(-1)^{|S|}h_{H \vee G} h_{H \vee G \vee \bigvee S}.$$
Because $H \not \le G$, we may choose an atom $a \le H$ with $a \not \le G$. Then the terms in the first sum indexed by those $S$ which contain $a$ cancel with the terms indexed by those $S$ that do not contain $a$, so the first sum is $0$. Similarly, the second sum is $0$. 
\end{proof}

In particular, the map $A^{\bullet}(\mathrm{M}) \to A^{\bullet}(\mathrm{M})/\operatorname{ann}(x_G)$ factors through $\varphi_G$, and similarly in the non-augmented setting. This will be a useful aid in computations. 

\subsection{Projection formulas and dragon-Hall--Rado}
We now show that the maps constructed in the previous section are compatible with degree maps. Along the way, we prove Theorem~\ref{dHR}. First we prove that the standard monomials span $\underline{A}^{\bullet}(\mathrm{M})$.

\begin{proposition}\label{prop:SMspanningnonaug}
The monomials
\begin{equation*}
\{h_{F_1}^{a_1} \dotsb h_{F_\ell}^{a_\ell} : \emptyset = F_0 < F_1 < \dotsb < F_\ell, \text{ }  a_i < \operatorname{rk}(F_i) - \operatorname{rk}(F_{i-1}) \text{ for }i=1, \dotsc, \ell\}
\end{equation*}
integrally span $\underline{A}^{\bullet}(\mathrm{M})$.
\end{proposition}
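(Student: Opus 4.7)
The plan is to deduce this from Proposition~\ref{prop:SMspanning} by pushing through the surjective quotient map $\varphi_{\emptyset} \colon A^{\bullet}(\mathrm{M}) \to \underline{A}^{\bullet}(\mathrm{M})$ supplied by Lemma~\ref{lem:phiG}. Since $\varphi_{\emptyset}$ sends each generator $h_F$ to its namesake, the images of the augmented standard monomials span $\underline{A}^{\bullet}(\mathrm{M})$. The augmented and non-augmented standard monomials differ only in the constraint on the first exponent: augmented allows $a_1 \le \operatorname{rk}(F_1)$, whereas non-augmented requires $a_1 < \operatorname{rk}(F_1)$. Hence it suffices to show that every augmented standard monomial with $a_1 = \operatorname{rk}(F_1)$ already maps to zero in $\underline{A}^{\bullet}(\mathrm{M})$.

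For such a monomial I will repeat the factor-peeling argument used in the proof of Proposition~\ref{prop:SMspanning}. Fix a saturated chain $\emptyset < G_1 < G_2 < \cdots < G_{\operatorname{rk}(F_1)} = F_1$ of flats. Lemma~\ref{lemma:dropdown}, being an identity in $A^{\bullet}(\mathrm{M})$, descends automatically to the quotient $\underline{A}^{\bullet}(\mathrm{M})$, and iterating it along the chain exactly as in that proof yields
\[
h_{F_1}^{\operatorname{rk}(F_1)} \;=\; h_{G_1}\, h_{G_2} \cdots h_{G_{\operatorname{rk}(F_1)-1}}\, h_{F_1}
\]
in $\underline{A}^{\bullet}(\mathrm{M})$. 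Since $G_1$ is an atom, $h_{G_1} = 0$ in $\underline{A}^{\bullet}(\mathrm{M})$ by the defining relations, so the whole product vanishes, killing the original monomial.

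I do not foresee a real obstacle: this is a one-line strengthening of the augmented argument, exploiting that atoms are identically zero in the non-augmented ring rather than merely $2$-nilpotent. The only small things to be mindful of are the endpoint cases $\ell = 0$ (where the monomial is $1$ and is standard on both sides) and $F_1$ itself an atom (so $\operatorname{rk}(F_1) = 1$ and the chain is just $\emptyset < F_1$), both of which the above argument handles without modification.
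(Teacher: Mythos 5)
Your argument is correct and is essentially the same as the paper's: both use the surjection $\varphi_{\emptyset}$ together with Proposition~\ref{prop:SMspanning} to reduce the claim to showing that $h_{F}^{\operatorname{rk}(F)}$ vanishes in $\underline{A}^{\bullet}(\mathrm{M})$, and both accomplish that by iterating Lemma~\ref{lemma:dropdown} down a saturated chain to extract a factor $h_a$ with $a$ an atom, which is zero in the non-augmented ring. The paper phrases this last step more tersely as ``$h_F^{\operatorname{rk}(F)}$ is divisible by $h_a$ for any atom $a$ contained in $F$,'' but the mechanism is identical to your explicit chain computation.
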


\begin{proof}
The map $\varphi_{\emptyset} \colon A^{\bullet}(\mathrm{M}) \to \underline{A}^{\bullet}(\mathrm{M})$ is surjective, so by Proposition~\ref{prop:SMspanning}, it suffices to show that $h_F^{\operatorname{rk}(F)} = 0$ in $\underline{A}^{\bullet}(\mathrm{M})$. By Lemma~\ref{lemma:dropdown}, $h_F^{\operatorname{rk}(F)}$ is divisible by $h_a$ for any atom $a$ contained in $F$, and so it is $0$ in $\underline{A}^{\bullet}(\mathrm{M})$. 
\end{proof}

We now prove Theorem~\ref{dHR}. We deduce it from Theorem~\ref{HR}, although one can also argue analogously to the proof of Theorem~\ref{HR}. 

\begin{proof}[Proof of Theorem~\ref{dHR}]
By Proposition~\ref{prop:SMspanningnonaug}, $\underline{A}^{r-1}(\mathrm{M})$ is spanned by $h_E^{r-1}$, so if the degree map is well-defined then it is an isomorphism. 
By Lemma~\ref{lem:annihilator}, there is a surjective ring homomorphism $\underline{A}^{\bullet}(\mathrm{M}) \to A^{\bullet}(\mathrm{M})/\operatorname{ann}(x_{\emptyset})$. Note that $A^{\bullet}(\mathrm{M})/\operatorname{ann}(x_{\emptyset})$ is identified with the ideal $(x_{\emptyset})$, with degree shifted by $1$. We define the degree map via the composition
$$\deg \colon \underline{A}^{r-1}(\mathrm{M}) \to A^{r-1}(\mathrm{M})/\operatorname{ann}(x_{\emptyset}) \to A^r(\mathrm{M}) \to \mathbb{Z},$$
where the second map is multiplication by $x_{\emptyset}$. Let $\mathcal{A}$ be the set of atoms of $\mathrm{M}$. In order to prove Theorem~\ref{dHR}, it suffices to show that
\begin{equation}
-\sum_{S \subseteq \mathcal{A}} (-1)^{|S|} \deg (h_{\bigvee S} h_{F_1} \dotsb h_{F_{r-1}})  = \begin{cases} 1 & \text{ for all }\emptyset \not= T \subseteq [r-1], \text{ }\operatorname{rk}(\bigvee_{i \in T} F_i) \ge |T| + 1 \\ 0 & \text{ otherwise},\end{cases} \label{eq:sum}
\end{equation}
as the left-hand side is, by definition, $\deg(x_{\emptyset} h_{F_1} \dotsb h_{F_{r-1}})$.
Suppose that $\{F_1, \dotsc, F_{r-1}\}$ satisfies the dragon-Hall--Rado condition. If $S$ is nonempty, $\{ F_1, \dotsc, F_{r-1},\bigvee S\}$ satisfies the Hall--Rado condition. We see that every term in the sum in \eqref{eq:sum} is $1$ except for $S = \emptyset$, so the sum is $1$. 

Now suppose that a multiset $\{F_1, \dotsc, F_{r-1}\}$ fails the dragon-Hall--Rado condition. Let 
$$\mathcal{S} = \{S \subseteq \mathcal{A} : \{F_1, \dotsc, F_{r-1}, \bigvee S\} \text{ fails the Hall--Rado condition}\}.$$ Clearly $\mathcal{S}$ is downward closed: if $T \subseteq S \in \mathcal{S}$, then $T \in \mathcal{S}$. Because $\{F_1, \dotsc, F_{r-1}\}$ fails the dragon-Hall--Rado condition, there is some $i$ such that $\{a \in \mathcal{A} : a \le F_i\}$ is contained in $\mathcal{S}$. 

Let $I_1, I_2 \in \mathcal{S}$. 
We claim that $I_1 \cup I_2 \in \mathcal{S}$. If there is a witness to the failure of the Hall--Rado condition for $F_1, \dotsc, F_{r-1}, \bigvee I_1$ which does not contain $\bigvee I_1$, then this is immediate. Otherwise, choose $T_1, T_2 \subseteq [r-1]$ such that $\{F_j : j \in T_1\} \cup \{\bigvee I_1\}$  and $\{F_j : j \in T_2\} \cup \{\bigvee I_2\}$ witness the failure of the Hall--Rado condition, so 
$$\operatorname{rk}(\bigvee I_1 \vee \bigvee_{j \in T_1} F_j) \le |T_1|,$$
and similarly for $T_2$. By the monotonicity of the rank function, we have that
$$\operatorname{rk}((\bigvee I_1 \vee \bigvee_{j \in T_1} F_j) \wedge (\bigvee I_2 \vee \bigvee_{j \in T_2} F_j)) \ge \operatorname{rk}(\bigvee_{j \in T_1 \cap T_2} F_j) \ge |T_1 \cap T_2|,$$
where the last inequality is by the assumption that no witness to the failure of the Hall--Rado condition is contained in $[r-1]$. By the submodularity of the rank function function, we have that
$$|T_1 \cap T_2| + \operatorname{rk}(\bigvee (I_1 \cup U_2) \vee \bigvee_{j \in T_1 \cup T_2} F_j) \le \operatorname{rk}(\bigvee I_1 \vee \bigvee_{j \in T_1} F_j) + \operatorname{rk}(\bigvee I_2 \vee \bigvee_{j \in T_2} F_j).$$
This implies that $\operatorname{rk}(\bigvee (I_1 \cup I_2)\vee \bigvee_{j \in T_1 \cup T_2} F_j) < |T_1 \cup T_2| + 1$, so $I_1 \cup I_2 \in \mathcal{S}$, as desired. 

Therefore $\mathcal{S}$ contains a maximal element, so it is a Boolean lattice of size at least $2$. We see that the sum in \eqref{eq:sum} is zero. 
\end{proof}

Let $G$ be a proper flat of $\mathrm{M}$. The tensor product of the degree maps gives an isomorphism $\deg \colon A^{\operatorname{rk}(G)}(\mathrm{M}^G) \otimes \underline{A}^{r-1 - \operatorname{rk}(G)}(\mathrm{M}_G) \to \mathbb{Z}$. If $G$ is nonempty, there is an isomorphism $\deg \colon \underline{A}^{\operatorname{rk}(G) - 1}(\mathrm{M}^G) \otimes \underline{A}^{r-1 - \operatorname{rk}(G)}(\mathrm{M}_G) \to \mathbb{Z}$. It will be convenient to extend the degree maps by zero to the entirety of $A^{\bullet}(\mathrm{M})$, $\underline{A}^{\bullet}(\mathrm{M})$ and so on. 
The following lemmas will be critical to the proof of Theorem~\ref{PD} and \ref{PDnonaug}.

\begin{lemma}\label{prop:projection}
Let $y \in A^{r-1}(\mathrm{M})$, and let $G$ be a proper flat. Then 
$$\deg(\varphi_G(y)) = \deg(x_G \cdot y).$$
\end{lemma}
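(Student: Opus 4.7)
The plan is to exploit that both sides are linear functionals on $A^{r-1}(\mathrm{M})$ factoring through $\varphi_G$: the left side by definition, and the right side because $\ker \varphi_G \subseteq \operatorname{ann}(x_G)$ by Lemma~\ref{lem:annihilator}. Writing $k = \operatorname{rk}(G)$, Propositions~\ref{prop:SMspanning} and \ref{prop:SMspanningnonaug} applied to $\mathrm{M}^G$ and $\mathrm{M}_G$ give $A^i(\mathrm{M}^G) = 0$ for $i > k$ and $\underline{A}^j(\mathrm{M}_G) = 0$ for $j > r - k - 1$. By linearity I reduce to monomials $y = \prod_{i=1}^{\ell} h_{F_i} \prod_{j=1}^{r-1-\ell} h_{H_j}$ with $F_i \le G$ and $H_j \not\le G$, and note that if $\ell \ne k$ then $\varphi_G(y) = 0$ (one of the two tensor factors vanishes by the above degree bounds), so both sides vanish automatically. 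For $\ell = k$, I may further replace each $H_j$ by $H_j \vee G$ without changing $\varphi_G(y)$ or $x_G \cdot y$ (the difference lies in $\ker \varphi_G \subseteq \operatorname{ann}(x_G)$), so I reduce to the case where each $H_j > G$; in this case $\varphi_G(y) = \prod h_{F_i} \otimes \prod h_{H_j}$ and the left side equals the $\mathrm{M}^G$-degree of $\prod h_{F_i}$ times the non-augmented $\mathrm{M}_G$-degree of $\prod h_{H_j}$.

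Expanding $x_G$ via its definition and invoking Theorem~\ref{HR}, the right side becomes $-\sum_{S \subseteq \mathcal{A}} (-1)^{|S|}$ times the indicator that the multiset $\{G \vee \bigvee_{a \in S} a\} \cup \{F_i\} \cup \{H_j\}$ satisfies the Hall--Rado condition in $\mathrm{M}$. The key combinatorial claim is that this multiset satisfies Hall--Rado in $\mathrm{M}$ if and only if $\{F_i\}$ satisfies Hall--Rado in $\mathrm{M}^G$ and $\{G \vee \bigvee_{a \in S} a\} \cup \{H_j\}$ satisfies Hall--Rado in $\mathrm{M}_G$. I will establish this by a direct case analysis on a witness subset $T$, partitioned by whether $T$ contains the index of $G \vee \bigvee_{a \in S} a$ and whether $T$ meets the $H_j$-indices; the inequalities $F_i \le G \le H_j$ cause every join using some $H_j$ to be independent of the $F_i$-choices, so each subcase reduces to Hall--Rado in exactly one of the two smaller matroids.

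Granting the claim and applying Theorem~\ref{HR} to $\mathrm{M}_G$, each term of the right-hand sum factors as the $\mathrm{M}^G$-degree of $\prod h_{F_i}$ times the augmented $\mathrm{M}_G$-degree of $h_{G \vee \bigvee_{a \in S} a} \prod h_{H_j}$. Pulling the first factor outside the sum and using the identity $-\sum_{S \subseteq \mathcal{A}}(-1)^{|S|} h_{G \vee \bigvee_{a \in S} a} = x_G^{\mathrm{M}_G}$ in $A^{\bullet}(\mathrm{M}_G)$---where $x_G^{\mathrm{M}_G}$ denotes the analog of $x_G$ for $\mathrm{M}_G$ at its minimum $G$, and the identity follows by inclusion--exclusion over the fibers of the surjection $a \mapsto G \vee a$ from $\mathcal{A}$ onto the atoms of $\mathrm{M}_G$---the right side becomes the $\mathrm{M}^G$-degree of $\prod h_{F_i}$ times the augmented $\mathrm{M}_G$-degree of $x_G^{\mathrm{M}_G} \prod h_{H_j}$. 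By the very construction of the non-augmented degree map in the proof of Theorem~\ref{dHR} (applied to $\mathrm{M}_G$), this last factor equals the non-augmented $\mathrm{M}_G$-degree of $\prod h_{H_j}$, matching the left side. The main obstacle is the combinatorial claim, whose case analysis is direct but must track the degenerate $S = \emptyset$ case, where $G \vee \bigvee_{a \in S} a$ degenerates to $G$, the minimum of $\mathrm{M}_G$, causing the corresponding Hall--Rado condition in $\mathrm{M}_G$ to fail---consistent with $h_G = 0$ in $A^{\bullet}(\mathrm{M}_G)$.
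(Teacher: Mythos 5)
Your proof is correct, and the first paragraph's reduction (both sides factor through $\varphi_G$, both vanish unless exactly $\operatorname{rk}(G)$ of the factors are $\le G$, and one may replace $H_j$ by $H_j \vee G$) is sound, as are the later ingredients: the factorization of the Hall--Rado condition into an $\mathrm{M}^G$-condition and an $\mathrm{M}_G$-condition, the inclusion--exclusion identity over fibers of $a \mapsto G \vee a$ showing that $x_G$ pushes forward to $x_G^{\mathrm{M}_G}$, and the appeal to the construction of the non-augmented degree. However, you take a substantially longer route than the paper. The paper's argument stops where your first paragraph begins: having observed that both functionals on $A^{r-1}(\mathrm{M})$ factor through $\varphi_G$, it then notes that the degree-$(r-1)$ part of $A^{\bullet}(\mathrm{M}^G) \otimes \underline{A}^{\bullet}(\mathrm{M}_G)$ is the single copy $A^{\operatorname{rk}(G)}(\mathrm{M}^G) \otimes \underline{A}^{r-1-\operatorname{rk}(G)}(\mathrm{M}_G) \cong \mathbb{Z}$ (by Theorems~\ref{HR} and~\ref{dHR}, already proved at this point), so the two functionals must agree up to an integer scalar. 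It then evaluates both at the single element $y = h_G^{\operatorname{rk}(G)} h_E^{r-1-\operatorname{rk}(G)}$ and checks both equal $1$, which fixes the scalar. This dimension count replaces your entire case-analysis: the combinatorial factorization of Hall--Rado, the inclusion--exclusion over fibers, and the unwinding of the non-augmented degree all become unnecessary. Your approach does have the pedagogical virtue of making the projection formula fully explicit on every monomial, and it independently re-derives (in the middle of the argument) the compatibility $\varphi_G(x_{\emptyset}) = 1 \otimes x_{\emptyset}^{\mathrm{M}_G}$, which the paper instead states separately as Lemma~\ref{lem:restrictxaug}; but for the purposes of proving this lemma the one-point evaluation is much more economical and avoids the need to track the $S = \emptyset$ degeneracy and the various case splits you flag at the end.
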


\begin{proof}
Lemma~\ref{lem:annihilator} implies that the right-hand side only depends on $\varphi_G(y)$. As the degree $r-1$ part of $A^{\bullet}(\mathrm{M}^G) \otimes \underline{A}^{\bullet}(\mathrm{M}_G)$ is $\mathbb{Z}$, the maps $y \mapsto \deg(\varphi_G(y))$ and $y \mapsto \deg(x_G \cdot y)$ are equal up to a constant. 

Let $y = h_{G}^{\operatorname{rk}(G)} h_E^{r - 1 - \operatorname{rk}(G)}$. We see from Theorem~\ref{HR} and Theorem~\ref{dHR} that $\deg(\varphi_G(y)) = 1$. Let $\mathcal{A}$ be the set of atoms of $\mathrm{M}$ not contained in $G$. We have
$$\deg(x_G \cdot y) = - \sum_{S \subseteq \mathcal{A}} (-1)^{|S|} \deg(h_{G \vee \bigvee S} h_{G}^{\operatorname{rk}(G)} h_E^{r - 1 - \operatorname{rk}(G)}).$$
The term $S = \emptyset$ vanishes because it does not satisfy the Hall--Rado condition; all other terms are $1$, so the sum is $1$. 
\end{proof}

\begin{lemma}\label{prop:projectionnonaug}
Let $y \in \underline{A}^{r-2}(\mathrm{M})$, and let $G$ be a proper nonempty flat. Then 
$$\deg(\varphi_G(y)) = \deg(x_G \cdot y).$$
\end{lemma}

\begin{proof}
This can be proved as in the proof of Lemma~\ref{prop:projection}. Alternatively, we can choose a lift $\tilde{y} \in A^{\bullet}(\mathrm{M})$ such that $\varphi_{\emptyset}(\tilde{y}) = y$ and apply Lemma~\ref{prop:projection} twice to $x_{\emptyset} \cdot x_G \cdot \tilde{y}$. 
\end{proof}

\subsection{Poincar\'{e} duality and linear independence}

Now that we have access to Lemma~\ref{prop:projection} and Lemma~\ref{prop:projectionnonaug}, we can begin our proof of Theorem~\ref{PD}. Our strategy is closely related to \cite{BES}*{Proposition 3.3.10}, which is based on \cite{Ham17}*{Proposition 3.2}.
Let $m = h_{F_{i_1}}^{a_1} \dotsb h_{F_{i_k}}^{a_k}$ be a standard monomial for $A^{\bullet}(\mathrm{M})$, with $\operatorname{rk}(F_{i_j}) = i_j$. Extend the chain $F_{i_1} < \dotsb < F_{i_k}$ to a maximal chain of flats $\emptyset = F_0 < F_1 < \dotsb < F_{r} = E$. Let $\mathcal{G}_m$ be the collection of flats obtained by removing from this chain the $a_j$ flats immediately below $F_{i_j}$ for each $j$ and removing $E$. Because $m$ is a standard monomial, $\{F_{i_1}, \dotsc, F_{i_k}\} \setminus \{E\} \subseteq \mathcal{G}_m$. We do this process and obtain a collection of flats $\mathcal{G}_m$ for each standard monomial $m$. We call $\mathcal{G}_m$ the \emph{essential flats} of $m$.

We will now prove the key propositions that allow us to prove Theorem~\ref{SM} and Theorem~\ref{PD}. See Example~\ref{ex:PDexample} for an example illustrating their proofs. 

\begin{proposition}\label{prop:deg1}
Let $m \in A^{\ell}(\mathrm{M})$ be a standard monomial, and let $\mathcal{G}_m = \{G_1 <  \dotsb < G_k\}$ be the essential flats. Then $\deg(m \cdot x_{G_1} \dotsb x_{G_k}) = 1$. 
\end{proposition}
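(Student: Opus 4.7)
The plan is to induct on the rank $r$ of $\mathrm{M}$, pivoting on the largest essential flat $G_k$. The base case $r = 0$ is immediate, since then $A^{\bullet}(\mathrm{M}) = \mathbb{Z}$. When $k = 0$, the standard-monomial inequalities together with $\deg m = r$ force $m = h_E^r$, and Theorem~\ref{HR} gives $\deg(h_E^r) = 1$ because $\{E, \dotsc, E\}$ vacuously satisfies the Hall--Rado condition. Now assume $k \geq 1$.

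Apply Lemma~\ref{prop:projection} to rewrite
$$\deg(m \cdot x_{G_1} \dotsb x_{G_k}) = \deg\bigl(\varphi_{G_k}(m \cdot x_{G_1} \dotsb x_{G_{k-1}})\bigr),$$
and expand $\varphi_{G_k}$ factor by factor. By the remark following Lemma~\ref{lem:restrictx}, $\varphi_{G_k}(x_{G_j}) = x_{G_j} \otimes 1$ for each $j < k$. Write the distinct flats of $m$ as $L_1 < \dotsb < L_n$. Since $\{L_1, \dotsc, L_n\} \setminus \{E\} \subseteq \mathcal{G}_m$ and $G_k = \max \mathcal{G}_m$, every $L_j$ is either $\leq G_k$ or equal to $E$.

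The argument now splits on whether $L_n = E$. When $L_n \neq E$, no flat of the extending chain strictly above $L_n$ is removed except $E$, so $G_k = F_{r-1}$; hence $\varphi_{G_k}(m) = m \otimes 1$, the tensor product lies in $A^{r-1}(\mathrm{M}^{G_k}) \otimes \underline{A}^0(\mathrm{M}_{G_k})$, and the degree collapses to $\deg_{A^{\bullet}(\mathrm{M}^{G_k})}(m \cdot x_{G_1} \dotsb x_{G_{k-1}})$. When $L_n = E$, the standard-monomial inequality $a_n < r - \operatorname{rk}(L_{n-1})$ identifies $G_k = F_{r - a_n - 1}$, and $\varphi_{G_k}(m) = (h_{L_1}^{a_1} \dotsb h_{L_{n-1}}^{a_{n-1}}) \otimes h_E^{a_n}$. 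The tensor-product degree factors as a product, and the $\underline{A}^{\bullet}(\mathrm{M}_{G_k})$-factor $\deg(h_E^{a_n}) = 1$ by Theorem~\ref{dHR} applied in $\mathrm{M}_{G_k}$, where $\{E, \dotsc, E\}$ vacuously satisfies dragon Hall--Rado.

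In either case, the remaining degree computation takes place in $A^{\bullet}(\mathrm{M}^{G_k})$, a matroid of strictly smaller rank. The main bookkeeping obstacle is to verify that the truncated monomial is standard in $A^{\bullet}(\mathrm{M}^{G_k})$ (its defining inequalities involve only ranks of flats $\leq G_k$, which are unchanged) and that its essential flats in $\mathrm{M}^{G_k}$ are exactly $\{G_1, \dotsc, G_{k-1}\}$. Both follow from the observation that a maximal chain of $\mathrm{M}^{G_k}$ extending the truncated monomial is the initial segment $F_0 < \dotsb < F_{\operatorname{rk}(G_k)} = G_k$ of the original extended chain, and removing its top together with the $a_j$ flats below each surviving $L_j$ produces precisely $\mathcal{G}_m \setminus \{G_k\}$. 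The inductive hypothesis then yields $\deg = 1$.
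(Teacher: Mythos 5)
Your proof follows the same philosophy as the paper (projection formula + restriction of $x_G$ + Hall--Rado / dragon-Hall--Rado to evaluate the final degree), but you unroll the decomposition into a rank induction rather than applying all the $\varphi_{G_i}$ at once. The bookkeeping in your two cases is correct: when $L_n \ne E$ you correctly get $G_k = F_{r-1}$, when $L_n = E$ you correctly get $G_k = F_{r-a_n-1}$, the image of $m$ under $\varphi_{G_k}$ is as you describe, the $\underline{A}^{\bullet}(\mathrm{M}_{G_k})$-factor contributes $1$, the truncated monomial stays standard in $\mathrm{M}^{G_k}$, and its essential flats are $\mathcal{G}_m \setminus \{G_k\}$.

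There is, however, a genuine gap: you peel off the \emph{largest} essential flat $G_k$ first, which requires knowing $\varphi_{G_k}(x_{G_j}) = x_{G_j} \otimes 1$ for $G_j < G_k$ in $A^{\bullet}(\mathrm{M})$. You justify this by the remark following Lemma~\ref{lem:restrictx}, but that remark is only \emph{stated}, not proven; it deals with the case $H < G$, which is distinctly harder than the proven lemmas: unlike the $H > G$ case (Lemmas~\ref{lem:restrictxaug}, \ref{lem:restrictx}), the terms $h_{H \vee \bigvee_{a \in S} a}$ in $x_H$ do not all lie on one side of $G$, so the claimed formula requires a cancellation argument that is not supplied here. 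The paper deliberately avoids this by peeling from the \emph{bottom}: it applies $\varphi_{G_1}$ first, which only needs $\varphi_{G_1}(x_{G_j}) = 1 \otimes x_{G_j}$ for $G_j > G_1$, and this is exactly what Lemmas~\ref{lem:restrictxaug} and \ref{lem:restrictx} establish. To repair your argument you would need to either prove the Remark's claim, or restructure the induction to peel off $G_1$ first, noting that this lands you in $\underline{A}^{\bullet}(\mathrm{M}_{G_1})$ and therefore requires running the induction simultaneously for the augmented and non-augmented Chow rings (which is effectively what the paper's one-shot tensor-product decomposition accomplishes).
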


\begin{proof}
Set $G_0 = \emptyset$ and set $G_{k+1} = E$. Note that possibly $G_1 = \emptyset$ as well. 
Applying Lemma~\ref{prop:projection} and Lemma~\ref{prop:projectionnonaug} repeatedly, using Lemma~\ref{lem:restrictxaug} and Lemma~\ref{lem:restrictx}, we can write the degree as a degree in $A^{\bullet}(\mathrm{M}^{G_1}_{G_0}) \otimes \underline{A}^{\bullet}(\mathrm{M}_{G_1}^{G_2}) \otimes  \dotsb  \otimes \underline{A}^{\bullet}(\mathrm{M}_{G_k}^{G_{k+1}})$. Here if $\operatorname{rk}(G_{i+1}) = \operatorname{rk}(G_{i}) + 1$, then we interpret $\underline{A}^{\bullet}(\mathrm{M}^{G_{i+1}}_{G_i})$ as $\mathbb{Z}$, and similarly if $\operatorname{rk}(G_1) = 0$.  

The only terms in the tensor product which are not $\mathbb{Z}$ are $\underline{A}^{\bullet}(\mathrm{M}_{G_i}^{G_{i+1}})$ if  $\operatorname{rk}(G_{i+1}) - \operatorname{rk}(G_i) > 1$ and $A^{\bullet}(\mathrm{M}_{G_0}^{G_1})$ if $\operatorname{rk}(G_1) > 0$. 
From the construction of $\mathcal{G}_m$, we see that, if $i \not= 0$, then $h_{G_{i+1}}^{\operatorname{rk}(G_{i+1}) - \operatorname{rk}(G_i) - 1}$ appears in $m$. If $i = 0$ and $\operatorname{rk}(G_1) > 0$, then $h_{G_{1}}^{\operatorname{rk}(G_{1})}$ appears in $m$. 
In the first case, after applying $\varphi_{G_i}$ for all $G_i \in \mathcal{G}_m$, $h_{G_{i+1}}^{\operatorname{rk}(G_{i+1}) - \operatorname{rk}(G_i) - 1}$ lands in top degree in $\underline{A}^{\bullet}(\mathrm{M}_{G_i}^{G_{i+1}})$. In the second case, $h_{G_1}^{\operatorname{rk}(G_1)}$ lands in top degree in $A^{\bullet}(\mathrm{M}^{G_1}_{G_0})$. By Theorem~\ref{HR} and ~\ref{dHR}, we see that the degree is $1$. 
\end{proof}

For a standard monomial $m = h_{F_1}^{a_1} \dotsb h_{F_k}^{a_k}$, we set $\delta(m) = (\sum_{\operatorname{rk}(F_i) \le 1} a_i, \sum_{\operatorname{rk}(F_i) \le 2} a_i, \dotsc, \sum_{\operatorname{rk}(F_i) \le r} a_i)$. 

\begin{proposition}\label{prop:degvanishing}
Let $m \in A^{\ell}(\mathrm{M})$ be a standard monomial, and let $\mathcal{G}_m = \{G_1 <  \dotsb < G_k\}$ be the essential flats. Let $m' \in A^{\ell}(\mathrm{M})$ be a standard monomial which has $\deg(m' \cdot \prod_{G \in \mathcal{G}_m} x_G) \not= 0$. Then either $m = m'$ or $\delta(m') > \delta(m)$ lexicographically. 
\end{proposition}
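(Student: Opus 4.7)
The plan is to apply the iterated projection $\Phi \colon A^{\bullet}(\mathrm{M}) \to A^{\bullet}(\mathrm{M}^{G_1}_{G_0}) \otimes \underline{A}^{\bullet}(\mathrm{M}_{G_1}^{G_2}) \otimes \cdots \otimes \underline{A}^{\bullet}(\mathrm{M}_{G_k}^{G_{k+1}})$ (with $G_0 = \emptyset$ and $G_{k+1} = E$) already used in the proof of Proposition~\ref{prop:deg1}, so that Lemmas~\ref{prop:projection}, \ref{prop:projectionnonaug}, \ref{lem:restrictx}, and \ref{lem:restrictxaug} give $\deg(m' \cdot \prod_{G \in \mathcal{G}_m} x_G) = \deg(\Phi(m'))$. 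Since $\Phi$ is a ring homomorphism and $\Phi(h_F)$ lies in the single factor whose interval $(G_i, G_{i+1}]$ contains $F$, the image is a pure tensor $\Phi(m') = \bigotimes_i m'_i$ with $m'_i = \prod_{F'_s \in (G_i, G_{i+1}]} h_{F'_s}^{a'_s}$.

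For $\deg(\Phi(m'))$ to be nonzero, each $m'_i$ must sit in the top degree of its factor: above the top the monomial vanishes, and below the top the $\deg$ map is zero. Summing the resulting per-factor equations through factor $i$ yields
\[ \sum_{F'_s \le G_{i+1}} a'_s = \operatorname{rk}(G_{i+1}) - i. \]
Writing $m = \prod_{t=1}^p h_{F_t}^{a_t}$, $r_t = \operatorname{rk}(F_t)$, and $\ell_t = a_1 + \cdots + a_t$, a short count (distinguishing whether $\emptyset$ is essential) shows that $F_t$ is the $(1 + r_t - \ell_t)$-th essential flat of $\mathcal{G}_m$, so specializing the displayed equation to $G_{i+1} = F_t$ gives $\sum_{F'_s \le F_t} a'_s = \ell_t = \delta(m)_{r_t}$. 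Since $F'_s \le F_t$ forces $\operatorname{rk}(F'_s) \le r_t$, this yields $\delta(m')_{r_t} \ge \delta(m)_{r_t}$ at each jump of $\delta(m)$.

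From here I would use monotonicity of $\delta(m')$ together with the piecewise constancy of $\delta(m)$ to upgrade this to the pointwise inequality $\delta(m')_j \ge \delta(m)_j$ at every $j$ (the range $j \ge r_p$ is handled by the $t = p$ case, which forces every $F'_s \le F_p$ because the right side is the total degree $\ell$). Pointwise $\ge$ immediately gives lex $\ge$. In the equality case $\delta(m) = \delta(m')$, matching jump locations and heights forces $m'$ to have $p$ distinct flats with $\operatorname{rk}(F'_t) = r_t$ and $a'_t = a_t$; the identity $\sum_{F'_s \le F_t} a'_s = \ell_t = \sum_{s \le t} a'_s$ together with $F'_s \le F_t \Rightarrow s \le t$ then forces $F'_s \le F_t$ for every $s \le t$, and taking $s = t$ with rank equality gives $F'_t = F_t$, hence $m' = m$.

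The main obstacle is the combinatorial bookkeeping: carefully identifying $F_t$'s position in $\mathcal{G}_m$ and turning ``each factor at top degree'' into the clean sum displayed above. A subtlety worth noting is that we use only the necessary direction (``nonzero $\deg$ implies each factor at top degree'') and do not need the stronger characterization of nonzero $\deg$ in terms of Hall--Rado conditions in each subinterval.
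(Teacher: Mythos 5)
Your proof is correct and follows essentially the same route as the paper's: iterate the projection formulas to express $\deg(m'\cdot\prod x_G)$ as a degree in a tensor product of smaller Chow rings, observe that a nonzero value forces each factor to sit in its top degree, and turn the resulting cumulative constraints $\sum_{F'_s \le G_{i+1}} a'_s = \operatorname{rk}(G_{i+1}) - i$ into the pointwise inequality $\delta(m') \ge \delta(m)$ with equality forcing $m=m'$. The only small imprecision is the claim that $h_{F'_s}$ lands in the factor whose interval $(G_i, G_{i+1}]$ ``contains'' $F'_s$ — the correct criterion is $F'_s \le G_{i+1}$ and $F'_s \not\le G_i$ (with $F'_s$ possibly incomparable to $G_i$), and the image is $h_{F'_s \vee G_i}$ rather than $h_{F'_s}$ — but your subsequent use of the condition $F'_s \le G_{i+1}$ and of degree counts is exactly right, so nothing breaks.
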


\begin{proof}
Set $G_0 = \emptyset$ and set $G_{k+1} = E$. 
As in the proof of Proposition~\ref{prop:deg1}, we can write the degree as a degree in $A^{\bullet}(\mathrm{M}^{G_1}_{G_0}) \otimes \underline{A}^{\bullet}(\mathrm{M}_{G_1}^{G_2}) \otimes  \dotsb  \otimes \underline{A}^{\bullet}(\mathrm{M}_{G_k}^{G_{k+1}})$. As before, the top degree of $\underline{A}^{\bullet}(\mathrm{M}_{G_i}^{G_{i+1}})$ is $\operatorname{rk}(G_{i+1}) - \operatorname{rk}(G_i) - 1$. Also, the top degree of $A^{\bullet}(\mathrm{M}_{G_0}^{G_1})$ is $\operatorname{rk}(G_1)$. 

Let $m' = h_{F_1}^{a_1} \dotsb h_{F_{\ell}}^{a_{\ell}}$. Let $G_j$ be the least element of $\mathcal{G}_m$ with $G_j \ge F$. After applying $\varphi_{G}$ for all $G \in \mathcal{G}_m$, $h_{F_i}^{a_i}$ is mapped to $1 \otimes \dotsb \otimes h_{G_{j-1} \vee F_i}^{a_i} \otimes \dotsb \otimes 1$. In particular,  for each $i > 0$ with $\operatorname{rk}(G_{i+1}) - \operatorname{rk}(G_i) > 1$, $\deg(m' \cdot \prod_{G \in \mathcal{G}_m} x_G)$ vanishes unless there are flats $F_j, \dotsc, F_p$ appearing in $m'$ with $a_j + \dotsb + a_p = \operatorname{rk}(G_{i+1}) - \operatorname{rk}(G_i) - 1$ and $F_q \le G_{i+1}$, $F_q \not \le G_i$ for each $q=j, \dotsc, p$. Similarly, if $\deg(m' \cdot \prod_{G \in \mathcal{G}_m} x_G)$ is nonzero and $\operatorname{rk}(G_1) > 0$, then there must be $F_1, \dotsc, F_p$ appearing in $m'$ with $a_1 + \dotsb + a_p = \operatorname{rk}(G_1)$ and $F_q \le G_1$ for each $q=1, \dotsc, p$. 
Adding these conditions up, this implies that the degree vanishes if $\delta(m') < \delta(m)$ or if $\delta(m) = \delta(m')$ and $m \not= m'$. 
\end{proof}

\begin{example}\label{ex:PDexample}
Let $\mathrm{M}$ be the Boolean matroid of rank $6$, i.e., $\mathcal{L}_{\mathrm{M}}$ is the Boolean lattice on $6$ elements. Let $F_i = \{1, \dotsc, i\}$ for $i = 0, 1, \dotsc, 6$. Let $m = h_{F_2}h_{F_5}^2$, so the essential flats $\mathcal{G}_m$ are $\{F_0, F_2, F_5\}$. We apply $\varphi_{F_0}$, then $\varphi_{F_2}$, and then $\varphi_{F_5}$ to write $\operatorname{deg}(m \cdot x_{F_0} x_{F_2}x_{F_5})$ as a degree in 
$$\underline{A}^{\bullet}(\mathrm{M}^{F_2}) \otimes \underline{A}^{\bullet}(\mathrm{M}^{F_5}_{F_2}) \otimes \underline{A}^{\bullet}(\mathrm{M}_{F_5}) = \underline{A}^{\bullet}(\mathrm{M}^{F_2}) \otimes \underline{A}^{\bullet}(\mathrm{M}^{F_5}_{F_2}).$$
We have $\varphi_{F_5} \circ \varphi_{F_2} \circ \varphi_{F_0}(h_{F_2}) = h_{F_2} \otimes 1$ and $\varphi_{F_5} \circ \varphi_{F_2} \circ \varphi_{F_0}(h_{F_5}^2) = 1 \otimes h_{F_5}^2$, so the degree is $1$. 

Let $m'$ be a standard monomial where the rank of the smallest flat appearing is at least $3$, so $\delta(m') < \delta(m)$ lexicographically. Then, for each $h_G$ appearing in $m'$, we have
 $$ \varphi_{F_5} \circ \varphi_{F_2} \circ \varphi_{F_0}(h_{G}) = \begin{cases} 1 \otimes h_{G \vee F_2} & G \le F_5 \\ 0 & G \not \le F_5, \end{cases}$$  In particular, no term appearing in $m'$ maps to something of the form $h_F \otimes 1$. This implies that $\deg(m' \cdot x_{F_0}x_{F_2}x_{F_5}) = 0$. 
\end{example}

\begin{proof}[Proof of Theorem~\ref{PD} and ~\ref{SM}]
Fix $0 \le k \le r$. Choose a total order $<$ on the set of standard monomials of degree $k$ such that $m < m'$ if $\delta(m) < \delta(m')$ lexicographically. For each standard monomial $m$, we have an element $d(m) \coloneqq \prod_{G \in \mathcal{G}_m} x_G \in A^{r - k}(\mathrm{M})$. By Proposition~\ref{prop:deg1} and Proposition~\ref{prop:degvanishing}, the matrix whose rows and columns are labeled by standard monomials of degree $k$, and whose entry indexed by $(m, m')$ is $\operatorname{deg}(m \cdot d(m'))$, is lower triangular with $1$'s on the diagonal. This implies that the standard monomials of degree $k$ are linearly independent, so, by Proposition~\ref{prop:SMspanning}, they are a basis. 

We also see that $\operatorname{rank} A^k(\mathrm{M}) \le \operatorname{rank} A^{r-k}(\mathrm{M})$. Replacing $k$ by $r-k$, we see that $\operatorname{rank} A^k(\mathrm{M}) = \operatorname{rank} A^{r-k}(\mathrm{M})$, and so the $d(m)$ rationally span $A^{r-k}(\mathrm{M}) \otimes \mathbb{Q}$. Because the determinant of the pairing between $A^k(\mathrm{M})$ and the subgroup of $A^{r-k}(\mathrm{M})$ spanned by the $d(m)$ is $1$, we see that the $d(m)$ must integrally span $A^{r-k}(\mathrm{M})$, which proves Theorem~\ref{PD}. 
\end{proof}

In order to prove Theorem~\ref{PDnonaug} and ~\ref{SMnonaug}, we will need an analogue of Proposition~\ref{prop:deg1} and ~\ref{prop:degvanishing} for non-augmented Chow rings. We will deduce these from their augmented versions. 

For a standard monomial $m = h_{F_{i_1}}^{a_1} \dotsb h_{F_{i_k}}^{a_k}$ for $\underline{A}^{\bullet}(\mathrm{M})$, we define $\mathcal{G}_m$ in the same way as in the augmented setting: extend the chain $F_{i_1} < \dotsb < F_{i_k}$ to a maximal chain of flats $\emptyset = F_0 < F_1 < \dotsb < F_{r} = E$. Let $\mathcal{G}_m$ be collection of flats obtained by removing from this chain the $a_j$ flats below $F_{i_j}$ for each $j$ and removing $E$. Because $m$ is a standard monomial for $\underline{A}^{\bullet}(\mathrm{M})$, $\emptyset \in \mathcal{G}_m$. We define $\delta(m)$ in the same way as for standard monomials for $A^{\bullet}(\mathrm{M})$. 

\begin{proposition}\label{prop:deguppertriang}
Let $m$ be a standard monomial of $\underline{A}^{\bullet}(\mathrm{M})$. Then 
\begin{enumerate}
\item we have that $\deg(m \cdot \prod_{\emptyset \not= G \in \mathcal{G}_m} x_G) = 1$.
\item for each standard monomial $m'$ for $\underline{A}^{\bullet}(\mathrm{M})$ with $\deg(m' \cdot \prod_{\emptyset \not= G \in \mathcal{G}_m} x_G) \not= 0$, either $m = m'$ or $\delta(m') > \delta(m)$ lexicographically. 
\end{enumerate}
\end{proposition}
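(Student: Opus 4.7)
The plan is to reduce both parts to their augmented counterparts, Propositions~\ref{prop:deg1} and~\ref{prop:degvanishing}, by lifting along the surjection $\varphi_\emptyset \colon A^{\bullet}(\mathrm{M}) \to \underline{A}^{\bullet}(\mathrm{M})$. First I would observe that if $m = h_{F_{i_1}}^{a_1} \dotsb h_{F_{i_k}}^{a_k}$ is a standard monomial for $\underline{A}^{\bullet}(\mathrm{M})$, then the strict inequality $a_1 < \operatorname{rk}(F_{i_1})$ in particular yields $a_1 \le \operatorname{rk}(F_{i_1})$, so the same word defines a standard monomial $\tilde m$ for $A^{\bullet}(\mathrm{M})$. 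Because the construction of the essential flats depends only on the underlying chain and exponents, one gets $\mathcal{G}_{\tilde m} = \mathcal{G}_m$ and $\delta(\tilde m) = \delta(m)$; the same strict inequality forces $\emptyset \in \mathcal{G}_{\tilde m}$. An inspection of the defining formula shows $\varphi_\emptyset(x_G) = x_G$ for every proper nonempty flat $G$.

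Next I would invoke the definition of $\deg_{\underline A}$ built in the proof of Theorem~\ref{dHR}, namely $\deg_{\underline A}(a) = \deg_A(x_\emptyset \cdot \tilde a)$ for any $a \in \underline{A}^{r-1}(\mathrm{M})$ and any lift $\tilde a \in A^{r-1}(\mathrm{M})$. Writing $\mathcal{G}_m = \{\emptyset = G_1 < G_2 < \dotsb < G_s\}$, part~(1) then follows from
\[
\deg_{\underline A}\Bigl(m \cdot \prod_{j=2}^{s} x_{G_j}\Bigr) = \deg_A\Bigl(x_\emptyset \cdot \tilde m \cdot \prod_{j=2}^{s} x_{G_j}\Bigr) = \deg_A\Bigl(\tilde m \cdot \prod_{G \in \mathcal{G}_{\tilde m}} x_G\Bigr) = 1,
\]
where the last equality is Proposition~\ref{prop:deg1}. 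For part~(2), interpreting the product $\prod_{G \in \mathcal{G}_m} x_G$ as $\prod_{j=2}^{s} x_{G_j}$ (since $x_\emptyset$ is not defined in $\underline{A}^{\bullet}(\mathrm{M})$), lifting $m'$ to $\tilde m'$ and applying the same identities converts the nonvanishing hypothesis into $\deg_A\bigl(\tilde m' \cdot \prod_{G \in \mathcal{G}_{\tilde m}} x_G\bigr) \ne 0$, and Proposition~\ref{prop:degvanishing} then forces either $\tilde m = \tilde m'$, equivalently $m = m'$, or $\delta(m') = \delta(\tilde m') > \delta(\tilde m) = \delta(m)$ lexicographically.

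I do not foresee a serious obstacle: the proposition is essentially a formal consequence of its augmented versions together with the identification $\deg_{\underline A} = \deg_A \circ (x_\emptyset \cdot)$. The only piece that needs attention is the small bookkeeping around the fact that $\emptyset$ is always an essential flat of a standard monomial of $\underline{A}^{\bullet}(\mathrm{M})$, so that the extra $x_\emptyset$ produced when lifting slots precisely into the augmented product $\prod_{G \in \mathcal{G}_{\tilde m}} x_G$ rather than producing a spurious extra factor.
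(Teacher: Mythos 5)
Your proposal is correct and takes essentially the same approach as the paper: lift $m$ and $m'$ to standard monomials of $A^\bullet(\mathrm{M})$, use the identity $\deg_{\underline A}(\varphi_\emptyset(y)) = \deg_A(x_\emptyset \cdot y)$ from Lemma~\ref{prop:projection} (which you restate as $\deg_{\underline A} = \deg_A \circ (x_\emptyset \cdot)$), note that $\emptyset \in \mathcal{G}_m$ so the extra $x_\emptyset$ completes $\prod_{G \in \mathcal{G}_m} x_G$, and then apply Propositions~\ref{prop:deg1} and~\ref{prop:degvanishing}. The bookkeeping you flag at the end (that $\emptyset$ is always essential for a standard monomial of $\underline A^\bullet(\mathrm{M})$) is indeed the point, and the paper handles it the same way.
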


\begin{proof}
Let $m = h_{F_1}^{a_1} \dotsb h_{F_k}^{a_k}$.
By Proposition~\ref{prop:projection}, we have that the degree $\deg(m \cdot \prod_{\emptyset \not= G \in \mathcal{G}_m} x_G)$ in $\underline{A}^{\bullet}(\mathrm{M})$ is equal to the degree in $A^{\bullet}(\mathrm{M})$ of $h_{F_1}^{a_1} \dotsb h_{F_k}^{a_k}$ times $\prod_{G \in \mathcal{G}_m} x_G$. The result then follows from Proposition~\ref{prop:deg1} and Proposition~\ref{prop:degvanishing}.
\end{proof}

\begin{proof}[Proof of Theorem~\ref{PDnonaug} and ~\ref{SMnonaug}]
Fix $0 \le k \le r$. Choose a total order $<$ on the set of standard monomials of degree $k$ such that $m < m'$ if $\delta(m) < \delta(m')$ lexicographically. For each standard monomial $m$, we have an element $d(m) \coloneqq \prod_{\emptyset \not= G \in \mathcal{G}_m} x_G \in \underline{A}^{r - 1 -  k}(\mathrm{M})$. By Proposition~\ref{prop:deguppertriang}, the matrix whose rows and columns are labeled by standard monomials of degree $k$, and whose entry indexed by $(m, m')$ is $\operatorname{deg}(m \cdot d(m'))$, is lower triangular with $1$'s on the diagonal. As in the proof of Theorem~\ref{PD} and \ref{SM}, this implies the linear independence of the standard monomials and Poincar\'{e} duality. 
\end{proof}

\section{Gradings by $\mathcal{L}_{\mathrm{M}}$}\label{sec:3}
One corollary of our approach is the existence of a ``grading'' of $A^{\bullet}(\mathrm{M})$ by $\mathcal{L}_{\mathrm{M}}$, which we now study. Special cases of this decomposition were used in \cite{EHKR}*{Section 5.1} and \cite{Rains}. 
For a flat $F$, let $A^{\bullet}(\mathrm{M})_F$ be the span of the monomials $h_{G_1}^{a_1} \dotsb h_{G_k}^{a_k}$, where $G_1 \vee \dotsb \vee G_k = F$. For example, $A^{\bullet}(\mathrm{M})_{\emptyset} = \operatorname{span}(1)$.

\begin{proposition}\label{prop:grading}
We have a direct sum decomposition 
$$A^{\bullet}(\mathrm{M}) = \bigoplus_{F \in \mathcal{L}_{\mathrm{M}}} A^{\bullet}(\mathrm{M})_F.$$
\end{proposition}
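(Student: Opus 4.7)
The plan is to obtain the decomposition by showing that $A^{\bullet}(\mathrm{M})$ inherits an $\mathcal{L}_{\mathrm{M}}$-indexed direct sum decomposition from the ambient polynomial ring. Concretely, let $R = \mathbb{Z}[h_F]_{F \in \overline{\mathcal{L}}_{\mathrm{M}}}$ and define a direct sum decomposition $R = \bigoplus_{F \in \mathcal{L}_{\mathrm{M}}} R_F$ by declaring a monomial $h_{G_1}^{a_1} \dotsb h_{G_k}^{a_k}$ (including the empty product, which lies in $R_\emptyset$) to lie in $R_{G_1 \vee \dotsb \vee G_k}$. Note that this is a genuine decomposition of $R$ as an abelian group, and it is multiplicatively compatible with $\mathcal{L}_{\mathrm{M}}$ in the sense that $R_F \cdot R_G \subseteq R_{F \vee G}$.

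The main (and essentially only) step is to verify that the defining ideal of $A^{\bullet}(\mathrm{M})$ is generated by elements that are homogeneous with respect to this decomposition. I would check this relation by relation:
\begin{itemize}
\item $(h_F - h_{G \vee F})(h_G - h_{G \vee F})$ expands to $h_F h_G - h_F h_{G \vee F} - h_G h_{G \vee F} + h_{G \vee F}^2$, and each of the four monomials has join equal to $F \vee G$, so the element lies in $R_{F \vee G}$.
\item $h_a^2$ lies in $R_a$.
\item $h_a h_F - h_a h_{F \vee a}$ has both monomials with join $a \vee F$, so it lies in $R_{a \vee F}$.
\end{itemize}
Consequently the defining ideal is a graded ideal for the $\mathcal{L}_{\mathrm{M}}$-decomposition, and the quotient $A^{\bullet}(\mathrm{M})$ inherits the direct sum decomposition, with $A^{\bullet}(\mathrm{M})_F$ being precisely the image of $R_F$ in the quotient.

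There is no real obstacle beyond this homogeneity verification: spanning of the $A^{\bullet}(\mathrm{M})_F$ is immediate from the fact that every monomial of $R$ lies in some $R_F$, while directness follows formally once the ideal is known to be graded. (If one wants an even more concrete handle, Lemma~\ref{lemma:chain} shows that each $A^{\bullet}(\mathrm{M})_F$ is spanned by the images of chain monomials whose maximal flat is $F$, and in fact the standard monomials of Proposition~\ref{prop:SMspanning} can be partitioned this way, giving another route once Theorem~\ref{thm:aug} is in hand.)
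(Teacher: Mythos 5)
Your proof is correct and is exactly the paper's approach, just written out in more detail: both observe that the polynomial ring $\mathbb{Z}[h_F]$ carries the $\mathcal{L}_{\mathrm{M}}$-indexed decomposition by the join of the variables in a monomial, and that the defining relations of $A^{\bullet}(\mathrm{M})$ are homogeneous for this decomposition. Your explicit relation-by-relation verification is what the paper compresses into the phrase ``the relations in $A^{\bullet}(\mathrm{M})$ respect this decomposition.''
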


\begin{proof}
There is clearly such a decomposition for $\mathbb{Z}[h_F]_{F \in \overline{\mathcal{L}}_{\mathrm{M}}}$, and the relations in $A^{\bullet}(\mathrm{M})$ respect this decomposition. 
\end{proof}

\begin{lemma}\label{lem:contraction}
Let $F$ be a proper nonempty flat of $\mathrm{M}$. There is a graded ring isomorphism $\bigoplus_{G \le F} A^{\bullet}(\mathrm{M}^F)_G \stackrel{\sim}{\to} \bigoplus_{G \le F} A^{\bullet}(\mathrm{M})_G$ given by $h_G \mapsto h_G$. In particular, the graded abelian groups $A^{\bullet}(\mathrm{M}^F)_F$ and $A^{\bullet}(\mathrm{M})_F$ are isomorphic.
\end{lemma}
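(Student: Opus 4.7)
The plan is to write the map down directly from the presentation and then use the standard monomial basis (Theorem~\ref{thm:aug}(1)) to see it is a bijection. There are really only two things to check: well-definedness of the ring map, and the bijection on the standard monomial bases.

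First, I would observe that $A^{\bullet}(\mathrm{M}^F) = \bigoplus_{G \le F} A^{\bullet}(\mathrm{M}^F)_G$, since $\mathcal{L}_{\mathrm{M}^F} = [\emptyset, F]$ is closed under joins and these joins agree with the joins in $\mathcal{L}_{\mathrm{M}}$. The rank function of $\mathrm{M}^F$ on $[\emptyset, F]$ also agrees with the rank function of $\mathrm{M}$. Consequently the defining relations of $A^{\bullet}(\mathrm{M}^F)$ are literally a subset of the defining relations of $A^{\bullet}(\mathrm{M})$, so the assignment $h_G \mapsto h_G$ extends to a well-defined graded ring homomorphism
\[ \psi \colon A^{\bullet}(\mathrm{M}^F) \longrightarrow A^{\bullet}(\mathrm{M}). \]
By Proposition~\ref{prop:grading}, the subgroup $\bigoplus_{G \le F} A^{\bullet}(\mathrm{M})_G$ is closed under multiplication (if $G_1, G_2 \le F$ then $G_1 \vee G_2 \le F$), hence a graded subring of $A^{\bullet}(\mathrm{M})$. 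Since each generator $h_G$ with $G \le F$ lies in this subring, the image of $\psi$ lies in $\bigoplus_{G \le F} A^{\bullet}(\mathrm{M})_G$, so $\psi$ factors through the claimed target.

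For the bijectivity, I would argue that $\psi$ carries the standard monomial basis of the source to the standard monomial basis of the target. By Theorem~\ref{thm:aug}(1) applied to $\mathrm{M}^F$, the standard monomials $h_{F_1}^{a_1} \dotsb h_{F_{\ell}}^{a_\ell}$ with $\emptyset = F_0 < F_1 < \dotsb < F_\ell \le F$ and the usual degree restrictions form a basis of $A^{\bullet}(\mathrm{M}^F)$, and under $\psi$ they map to the subset of the standard monomials of $A^{\bullet}(\mathrm{M})$ (Theorem~\ref{thm:aug}(1) applied to $\mathrm{M}$) whose flats all lie below $F$. Since a standard monomial $h_{F_1}^{a_1} \dotsb h_{F_\ell}^{a_\ell}$ with $F_1 < \dotsb < F_\ell$ lies in the summand $A^{\bullet}(\mathrm{M})_{F_\ell}$, the standard monomial basis respects the decomposition of Proposition~\ref{prop:grading}. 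Thus the standard monomials with all flats $\le F$ form a basis of $\bigoplus_{G \le F} A^{\bullet}(\mathrm{M})_G$. So $\psi$ sends a $\mathbb{Z}$-basis to a $\mathbb{Z}$-basis and is an isomorphism.

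The last sentence of the lemma is then extracted by restricting $\psi$ to the $A^{\bullet}(\mathrm{M}^F)_F$ summand of the source, which lands in $A^{\bullet}(\mathrm{M})_F$ by the $\mathcal{L}_{\mathrm{M}}$-grading. There is nothing really hard here; the only mild obstacle is making sure that the identification of the two standard monomial bases is legitimate, which is why I want to remark explicitly that the rank function and join on $[\emptyset, F]$ are intrinsic and do not change when one passes from $\mathrm{M}$ to $\mathrm{M}^F$.
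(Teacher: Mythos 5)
Your proof is correct, but it takes a genuinely different route from the paper. The paper's argument is a one-sentence presentation-level argument: it observes that the subring $\bigoplus_{G \le F} A^{\bullet}(\mathrm{M})_G$ is generated by $\{h_G : G \le F\}$, and that because the defining ideal of $A^{\bullet}(\mathrm{M})$ in $\mathbb{Z}[h_F]_{F \in \overline{\mathcal{L}}_{\mathrm{M}}}$ is homogeneous for the $\mathcal{L}_{\mathrm{M}}$-grading (each relation lives in a single piece), the kernel of $\mathbb{Z}[h_G]_{G \le F} \twoheadrightarrow \bigoplus_{G \le F} A^{\bullet}(\mathrm{M})_G$ is exactly the $\bigoplus_{H \le F}$ part of that ideal, which is generated precisely by the defining relations of $A^{\bullet}(\mathrm{M}^F)$; hence the two presentations coincide. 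This does not invoke the standard monomial basis at all, which matters because the paper later sketches (in a remark in the same section) an alternative proof of Theorem~\ref{SM} \emph{using} this decomposition, and for that route to be non-circular one wants Lemma~\ref{lem:contraction} to stand independently of Theorem~\ref{SM}. Your argument instead defines the ring map $\psi$ directly (which is the same well-definedness observation the paper makes implicitly) and then appeals to Theorem~\ref{thm:aug}(1) twice, for $\mathrm{M}^F$ and for $\mathrm{M}$, to match up bases; the key supporting observation, that each standard monomial $h_{F_1}^{a_1}\dotsb h_{F_\ell}^{a_\ell}$ with $F_1 < \dotsb < F_\ell$ sits in the single summand $A^{\bullet}(\mathrm{M})_{F_\ell}$, is correct and makes the basis argument go through. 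Both proofs are valid; the paper's is lighter on hypotheses, while yours is arguably more self-explanatory but relies on the heavier Theorem~\ref{thm:aug}(1), which happens to be available by that point in the paper.
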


\begin{proof}
Note that the subring $\bigoplus_{G \le F} A^{\bullet}(\mathrm{M})_G$ of $A^{\bullet}(\mathrm{M})$ is generated by $h_G$ for $G \le F$, and the relations in $\bigoplus_{G \le F} A^{\bullet}(\mathrm{M}^F)_F$ and $\bigoplus_{G \le F} A^{\bullet}(\mathrm{M})_F$ are the same. 
\end{proof}

If $\operatorname{rk}(\mathrm{M}) > 0$, the \emph{truncation} $\operatorname{Tr}\mathrm{M}$ is the matroid whose lattice of flats $\mathcal{L}_{\operatorname{Tr}\mathrm{M}}$ is obtained by removing the flats $F$ with $\operatorname{rk}(F) = \operatorname{rk}(E) - 1$. There is a surjective ring homomorphism $A^{\bullet}(\mathrm{M}) \to A^{\bullet}(\operatorname{Tr}\mathrm{M})$ given by $h_F \mapsto h_{E}$ if $\operatorname{rk}(F) = \operatorname{rk}(E) - 1$ and $h_F \mapsto h_F$ otherwise. The kernel of this map is $(h_E - h_F : \operatorname{rk}(F) = \operatorname{rk}(E) - 1)$. 

\begin{lemma}\label{lem:truncation}
Let $\operatorname{M}$ be a matroid of rank $r > 0$. 
There is an isomorphism of graded $A^{\bullet}(\mathrm{M})$-modules $A^{\bullet}(\operatorname{Tr}\mathrm{M})[-1] \stackrel{\sim}{\to} A^{\bullet}(\mathrm{M})_{E}$ given by $1 \mapsto h_{E}$. 
\end{lemma}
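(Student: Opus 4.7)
The plan is to define $\phi \colon A^{\bullet}(\operatorname{Tr}\mathrm{M})[-1] \to A^{\bullet}(\mathrm{M})_E$ as multiplication by $h_E$ and to show it is a graded $A^{\bullet}(\mathrm{M})$-module isomorphism by identifying standard monomial bases on both sides. To see that multiplication by $h_E$ descends from $A^{\bullet}(\mathrm{M})$ to $A^{\bullet}(\operatorname{Tr}\mathrm{M})$, note that the kernel of $A^{\bullet}(\mathrm{M}) \twoheadrightarrow A^{\bullet}(\operatorname{Tr}\mathrm{M})$ is generated by $h_E - h_F$ for $F$ of rank $r-1$. Since $E$ covers every such $F$, Lemma~\ref{lemma:dropdown} gives $h_Eh_F = h_E^2$, so $h_E(h_E - h_F) = 0$. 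The image plainly lies in $A^{\bullet}(\mathrm{M})_E$ because joining $E$ with any other flat yields $E$, and the resulting $\phi$ is $A^{\bullet}(\mathrm{M})$-linear by construction and sends $1$ to $h_E$.

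For surjectivity, I would apply Lemma~\ref{lemma:chain} inside $A^{\bullet}(\mathrm{M})_E$: the straightening relation \eqref{eq:straighten} preserves the join of the flats appearing in each monomial, so every element of $A^{\bullet}(\mathrm{M})_E$ is a $\mathbb{Z}$-linear combination of monomials $h_{F_1}^{a_1} \dotsb h_{F_\ell}^{a_\ell}$ with $F_1 < \dotsb < F_\ell = E$. Factoring out one copy of $h_E$ exhibits each such monomial as lying in the image of $\phi$.

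For injectivity, by Proposition~\ref{prop:grading} together with Theorem~\ref{thm:aug}, the standard monomials of degree $k$ whose top flat is $E$ form an integral basis of $A^k(\mathrm{M})_E$. A case check on the exponent conditions in \eqref{SM}, using $\operatorname{rk}_{\operatorname{Tr}\mathrm{M}}(E) = r - 1$ to see that the bound on the exponent of $h_E$ shifts by exactly one, shows that multiplication by $h_E$ is a bijection between the standard monomials of degree $k-1$ for $A^{\bullet}(\operatorname{Tr}\mathrm{M})$ and those of degree $k$ for $A^{\bullet}(\mathrm{M})$ with top flat $E$. Since $\phi$ sends a basis to a basis, it is an isomorphism. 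The main obstacle is the bookkeeping in this bijection: one must separate the cases $F_\ell = E$ (so $h_E$ only raises an existing exponent) and $F_\ell < E$ (so a new factor is appended), and verify that the inequality $a_\ell < \operatorname{rk}_{\operatorname{Tr}\mathrm{M}}(F_\ell) - \operatorname{rk}(F_{\ell-1})$ in the truncation matches $a_\ell + 1 < r - \operatorname{rk}(F_{\ell-1})$ back in $\mathrm{M}$.
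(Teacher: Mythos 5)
Your proof is correct, and it takes a somewhat different route than the paper's at the key step. Both arguments define the map as multiplication by $h_E$, use Lemma~\ref{lemma:dropdown} to show this kills the kernel of $A^{\bullet}(\mathrm{M}) \twoheadrightarrow A^{\bullet}(\operatorname{Tr}\mathrm{M})$ (so $\phi$ is well-defined), and use the straightening in Lemma~\ref{lemma:chain} to identify $A^{\bullet}(\mathrm{M})_E$ with the ideal $(h_E)$ and thereby get surjectivity. The difference is in proving injectivity. The paper rewrites $(h_E) \cong A^{\bullet}(\mathrm{M})/\operatorname{ann}(h_E)$ and shows the surjection $A^{\bullet}(\operatorname{Tr}\mathrm{M}) \twoheadrightarrow A^{\bullet}(\mathrm{M})/\operatorname{ann}(h_E)$ is an isomorphism by invoking Poincar\'{e} duality for $A^{\bullet}(\operatorname{Tr}\mathrm{M})$: since $h_E^{r-1}\neq 0$, any nonzero kernel would have to meet $A^{r-1}(\operatorname{Tr}\mathrm{M})$, which it doesn't. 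You instead compare standard monomial bases directly, observing that $h_E \cdot (-)$ carries the basis \eqref{SM} of $A^{k-1}(\operatorname{Tr}\mathrm{M})$ bijectively onto the basis of $A^k(\mathrm{M})_E$ (standard monomials of $\mathrm{M}$ with top flat $E$), with the exponent bound for $h_E$ shifting by one because $\operatorname{rk}_{\operatorname{Tr}\mathrm{M}}(E)=r-1$. This relies on Theorem~\ref{SM} rather than Theorem~\ref{PD}, which is arguably a more elementary ingredient, though both are available at this point in the paper. One small point to tighten: when verifying the bijection and the surjectivity claim, it helps to note explicitly that a standard monomial of $\mathrm{M}$ with top flat $E$ can never involve a flat of rank $r-1$ (if $F_{\ell-1}$ had rank $r-1$, the condition $a_\ell < r - \operatorname{rk}(F_{\ell-1})$ would force $a_\ell<1$), so no awkward identifications with flats missing from $\operatorname{Tr}\mathrm{M}$ arise.
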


\begin{proof}
By Lemma~\ref{lemma:chain} and its proof, $A^{\bullet}(\mathrm{M})_{E}$ is the ideal generated by $h_E$. We have an identification of $A^{\bullet}(\mathrm{M})$-modules $A^{\bullet}(\mathrm{M})/\operatorname{ann}(h_E)[-1] \stackrel{\sim}{\to} (h_E)$ given by multiplication by $h_E$.

We claim that the kernel of the map $A^{\bullet}(\mathrm{M}) \to  A^{\bullet}(\operatorname{Tr}\mathrm{M})$ is $\operatorname{ann}(h_E)$, which concludes the proof. If $F$ is a flat of $\mathrm{M}$ with $\operatorname{rk}(F) = r- 1$, then $h_E(h_F - h_E) = 0$ by Lemma~\ref{lemma:dropdown}, so $\operatorname{ann}(h_E)$ contains the kernel. Note that $h_E^{r-1}$ is nonzero in $A^{\bullet}(\mathrm{M})/\operatorname{ann}(h_E)$ by Theorem~\ref{HR}. 
Poincar\'{e} duality for $A^{\bullet}(\operatorname{Tr}\mathrm{M})$ then implies that the surjective map $A^{\bullet}(\operatorname{Tr}\mathrm{M}) \to A^{\bullet}(\mathrm{M})/\operatorname{ann}(h_E)$ is an isomorphism because it is an isomorphism in degree $r-1$. Indeed, Poincar\'{e} duality implies that every nonzero ideal of $A^{\bullet}(\operatorname{Tr} \mathrm{M})$ intersects $A^{r-1}(\operatorname{Tr} \mathrm{M})$ nontrivially. 
\end{proof}

Combining Lemma~\ref{lem:contraction} with Lemma~\ref{lem:truncation} gives that, if $F$ is a nonempty flat, then $A^{\bullet}(\mathrm{M})_F \stackrel{\sim}{\to} A^{\bullet}(\operatorname{Tr}\mathrm{M}^F)[-1]$ as graded abelian groups. In particular, $A^{\bullet}(\mathrm{M})_F$ vanishes above degree $\operatorname{rk}(F)$ and is $1$-dimensional in degree $\operatorname{rk}(F)$. By Theorem~\ref{SM}, we see that $A^{\operatorname{rk}(F)}(\mathrm{M})_F$ is spanned by $h_F^{\operatorname{rk}(F)}$. In particular, a monomial $h_{G_1}^{a_1} \dotsb h_{G_k}^{a_{k}}$, with $a_1 + \dotsb + a_{k} = \operatorname{rk}(F)$ and $G_i \le F$ for each $i$ is either $0$ or equal to $h_F^{\operatorname{rk}(F)}$. 

The \emph{graded M\"{o}bius algebra} $H^{\bullet}(\mathrm{M})$ of a matroid $\mathrm{M}$ is a ring which is $\bigoplus_{F \in \mathcal{L}_{\mathrm{M}}} y_F \cdot \mathbb{Z}$ as an abelian group, with multiplication $y_F \cdot y_G = y_{F \vee G}$ if $\operatorname{rk}(F) + \operatorname{rk}(G) = \operatorname{rk}(F \vee G)$ and $y_F \cdot y_G = 0$ otherwise. Note that $H^{\bullet}(\mathrm{M})$ is graded, with $y_F$ in degree $\operatorname{rk}(F)$, and that $H^{\bullet}(\mathrm{M})$ is generated in degree $1$. 
A detailed study of modules over the graded M\"{o}bius algebra is central to the proof of the top-heavy conjecture in \cite{BHMPW20b}. One of the key results is the following realization of $H^{\bullet}(\mathrm{M})$ as a subring of $A^{\bullet}(\mathrm{M})$. We give a simple proof. 

\begin{proposition}\cite{BHMPW20a}*{Proposition 2.15}
There is an injective ring homomorphism $H^{\bullet}(\mathrm{M}) \to A^{\bullet}(\mathrm{M})$, defined by sending $y_a$ to $h_a$ for each atom of $\mathcal{L}_{\mathrm{M}}$.
\end{proposition}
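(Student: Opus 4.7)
The plan is to define $\phi \colon H^\bullet(\mathrm{M}) \to A^\bullet(\mathrm{M})$ on the $\mathbb{Z}$-basis $\{y_F\}_{F \in \mathcal{L}_{\mathrm{M}}}$ by $\phi(y_F) = h_F^{\operatorname{rk}(F)}$, with $\phi(y_\emptyset) = 1$. Injectivity will then be immediate from the $\mathcal{L}_{\mathrm{M}}$-grading of Proposition~\ref{prop:grading}: the images $h_F^{\operatorname{rk}(F)}$ lie in distinct graded pieces $A^{\operatorname{rk}(F)}(\mathrm{M})_F$, and each is a standard monomial in the sense of Theorem~\ref{SM}, hence nonzero. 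The identity $\phi(y_a) = h_a$ on atoms is built into the definition. What requires work is verifying multiplicativity.

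The crucial lemma I would establish first is: for any independent atoms $a_1, \ldots, a_k$ spanning a flat $F$ of rank $k$, one has $h_{a_1} \cdots h_{a_k} = h_F^k$ in $A^\bullet(\mathrm{M})$. The argument is by induction on $k$, the base $k = 1$ being trivial. For $k \ge 2$, set $G = a_1 \vee \cdots \vee a_{k-1}$, so $G$ has rank $k-1$ and $F = G \vee a_k$ covers $G$. The inductive hypothesis gives $h_{a_1} \cdots h_{a_{k-1}} = h_G^{k-1}$. Combining the straightening relation \eqref{eq:straighten} applied to $G$ and $a_k$, namely $h_G h_{a_k} = h_G h_F + h_{a_k} h_F - h_F^2$, with Lemma~\ref{lemma:dropdown} (which gives $h_G h_F = h_F^2$) yields $h_G h_{a_k} = h_{a_k} h_F$. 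Iterating, $h_G^{k-1} h_{a_k} = h_{a_k} h_F^{k-1}$. Finally, picking a maximal chain $\emptyset = H_0 < H_1 < \cdots < H_k = F$ with $H_1 = a_k$ and iteratively rewriting powers of $h_F$ via $h_{H_i}^2 = h_{H_{i-1}} h_{H_i}$ (another application of Lemma~\ref{lemma:dropdown}) gives both $h_F^k = h_{H_1} h_{H_2} \cdots h_{H_{k-1}} h_F$ and $h_F^{k-1} = h_{H_2} \cdots h_{H_{k-1}} h_F$; multiplying the latter by $h_{a_k} = h_{H_1}$ reproduces $h_F^k$, so $h_{a_k} h_F^{k-1} = h_F^k$ and the induction closes.

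With the lemma in hand, multiplicativity follows formally. Given flats $F, G$ with bases $\{a_i\}$ and $\{b_j\}$, $\phi(y_F)\phi(y_G) = h_F^{\operatorname{rk}(F)} h_G^{\operatorname{rk}(G)} = \bigl(\prod_i h_{a_i}\bigr)\bigl(\prod_j h_{b_j}\bigr)$. If $\operatorname{rk}(F) + \operatorname{rk}(G) = \operatorname{rk}(F \vee G)$, submodularity forces $F \wedge G = \emptyset$, so the atoms are pairwise distinct and together form a basis for $F \vee G$; the lemma then gives $\phi(y_F)\phi(y_G) = h_{F \vee G}^{\operatorname{rk}(F \vee G)} = \phi(y_{F \vee G}) = \phi(y_F y_G)$. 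Otherwise, the product sits in $A^{\operatorname{rk}(F) + \operatorname{rk}(G)}(\mathrm{M})_{F \vee G}$, which vanishes since its degree exceeds $\operatorname{rk}(F \vee G)$ (using $A^\bullet(\mathrm{M})_{F \vee G} \cong A^\bullet(\operatorname{Tr}\mathrm{M}^{F \vee G})[-1]$ from Lemmas~\ref{lem:contraction} and~\ref{lem:truncation}), while $\phi(y_F y_G) = 0$ by definition of the M\"obius algebra.

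The main obstacle is the chained dropdown/straightening manipulation in the key lemma, specifically using a maximal chain through $a_k$ to pin down $h_{a_k} h_F^{k-1} = h_F^k$. Once that is in place, the $\mathcal{L}_\mathrm{M}$-grading handles multiplicativity and injectivity mechanically.
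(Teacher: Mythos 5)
Your argument is correct, and it differs from the paper's in one substantive way. Both proofs reduce to the identity $h_{a_1}\dotsb h_{a_{\operatorname{rk}(F)}} = h_F^{\operatorname{rk}(F)}$ when $a_1,\dotsc,a_{\operatorname{rk}(F)}$ are atoms whose join is the flat $F$, and both then invoke the $\mathcal{L}_{\mathrm{M}}$-grading of Proposition~\ref{prop:grading} to obtain injectivity and to handle the vanishing case of multiplicativity. The difference lies in how that identity is established. The paper observes that $A^{\operatorname{rk}(F)}(\mathrm{M})_F$ is spanned by $h_F^{\operatorname{rk}(F)}$ (via Lemmas~\ref{lem:contraction} and~\ref{lem:truncation}) and then applies the Hall--Rado degree formula of Theorem~\ref{HR} to conclude that $h_{a_1}\dotsb h_{a_{\operatorname{rk}(F)}}$ has degree $1$, hence is nonzero, hence equals $h_F^{\operatorname{rk}(F)}$. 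You instead prove the identity directly by induction on $\operatorname{rk}(F)$, using only the straightening relation~\eqref{eq:straighten} and Lemma~\ref{lemma:dropdown}: the chain through $a_k$ is exactly the right device to convert $h_{a_k}h_F^{\operatorname{rk}(F)-1}$ into $h_F^{\operatorname{rk}(F)}$, and the intermediate step $h_G h_{a_k} = h_{a_k}h_F$ is a clean consequence of the two cited lemmas. Your route has the virtue of avoiding the degree map for the key identity (it is closer in spirit to the straightening computations of Section~2.1), although your treatment of the ``otherwise'' case still leans on Lemma~\ref{lem:truncation}, whose proof uses Theorem~\ref{HR} and Poincar\'{e} duality; one could alternatively derive the needed vanishing above degree $\operatorname{rk}(F\vee G)$ directly from the straightening argument in Proposition~\ref{prop:SMspanning}, which would make your proof of this proposition independent of the degree map entirely. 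The paper's version is shorter because it freely uses the machinery already in place by Section~3.
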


\begin{proof}
For a flat $F$, let $a_1, \dotsc, a_{\operatorname{rk}(F)}$ be atoms with $\bigvee_{i=1}^{\operatorname{rk}(F)} a_i = F$. By Theorem~\ref{HR}, $\operatorname{deg}(h_{a_1} \dotsb h_{a_{\operatorname{rk}(F)}} h_{E}^{r - \operatorname{rk}(F)}) = 1$. In particular, by the discussion above, we have that $h_{a_1} \dotsb h_{a_{\operatorname{rk}(F)}} = h_F^{\operatorname{rk}(F)}$.
By the direct sum decomposition in Proposition~\ref{prop:grading}, the subalgebra generated by the $h_a$ for $a$ an atom has a basis given by $\{h_F^{\operatorname{rk}(F)}\}_{F \in \mathcal{L}_{\mathrm{M}}}$. 
We therefore see that this algebra is isomorphic to $H^{\bullet}(\mathrm{M})$. 
\end{proof}

For a matroid $\mathrm{M}$, let $\mathrm{H}_{\mathrm{M}}(t)$ be the Hilbert series of $A^{\bullet}(\mathrm{M})$, and let $\underline{\mathrm{H}}_{\mathrm{M}}(t)$ be the Hilbert series of $\underline{A}^{\bullet}(\mathrm{M})$. These polynomials, which are sometimes called (augmented) Chow polynomials, have been extensively studied in \cite{MotivicZetaMatroid} and especially \cite{FMSV}, where the authors derive several recursive relations between them. 
The analysis in this section immediately generalizes to $\underline{A}^{\bullet}(\mathrm{M})$, and this gives new recursions for $\mathrm{H}_{\mathrm{M}}(t)$ and $\underline{\mathrm{H}}_{\mathrm{M}}(t)$. 

\begin{corollary}
We have that
$$\mathrm{H}_{\mathrm{M}}(t) = 1 + \sum_{F \in \overline{\mathcal{L}}_{\mathrm{M}}} t \cdot \mathrm{H}_{\operatorname{Tr} \mathrm{M}^F}(t) \quad \text{and} \quad \underline{\mathrm{H}}_{\mathrm{M}}(t) = 1 + \sum_{F \in \mathcal{L}_{\mathrm{M}}, \,\operatorname{rk}(F) \ge 2} t \cdot \underline{\mathrm{H}}_{\operatorname{Tr} \mathrm{M}^F}(t).$$
\end{corollary}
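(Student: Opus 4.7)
The plan is to read off each Hilbert series directly from the direct sum decomposition of Proposition~\ref{prop:grading} by summing the graded dimensions of the individual pieces $A^{\bullet}(\mathrm{M})_F$ (and similarly $\underline{A}^{\bullet}(\mathrm{M})_F$) as $F$ ranges over $\mathcal{L}_{\mathrm{M}}$.

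For the augmented formula, I first note that the piece $A^{\bullet}(\mathrm{M})_{\emptyset} = \operatorname{span}(1)$ contributes $1$. For each nonempty flat $F$, the paragraph immediately preceding the corollary already records a graded isomorphism $A^{\bullet}(\mathrm{M})_F \cong A^{\bullet}(\operatorname{Tr}\mathrm{M}^F)[-1]$, obtained by composing Lemma~\ref{lem:contraction} with Lemma~\ref{lem:truncation}. Its Hilbert series is $t \cdot \mathrm{H}_{\operatorname{Tr}\mathrm{M}^F}(t)$, and summing over all $F \in \mathcal{L}_{\mathrm{M}}$ yields the first formula.

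For the non-augmented formula, I would first establish the $\underline{A}^{\bullet}$-analogs of Proposition~\ref{prop:grading}, Lemma~\ref{lem:contraction}, and Lemma~\ref{lem:truncation}. The $\mathcal{L}_{\mathrm{M}}$-grading descends to $\underline{A}^{\bullet}(\mathrm{M})$ because the extra defining relations $h_a^2$ and $h_a h_F - h_a h_{F \vee a}$ are homogeneous with respect to the join of their indices. The contraction lemma carries over verbatim, since the subring generated by $\{h_G : G \le F\}$ is cut out by the same generators and relations on both sides. The truncation lemma carries over with Theorem~\ref{PDnonaug} replacing Theorem~\ref{PD} in the step showing that the surjection $\underline{A}^{\bullet}(\operatorname{Tr}\mathrm{M}) \to \underline{A}^{\bullet}(\mathrm{M})/\operatorname{ann}(h_E)$ is an isomorphism.

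The essential new observation in the non-augmented case is that the atom pieces vanish: if $F$ is an atom, then $\underline{A}^{\bullet}(\mathrm{M})_F$ is spanned by powers of $h_F$ (because any $G \in \overline{\mathcal{L}}_{\mathrm{M}}$ with $G \le F$ must equal $F$), and all these powers are zero since $h_F = 0$ in $\underline{A}^{\bullet}(\mathrm{M})$. Hence the sum reduces to $F = \emptyset$ (contributing $1$) together with the flats of rank $\ge 2$ (each contributing $t \cdot \underline{\mathrm{H}}_{\operatorname{Tr}\mathrm{M}^F}(t)$), giving the second formula. The main point of care is verifying this vanishing so that the index set is exactly the flats of rank $\ne 1$; the remaining bookkeeping about degree shifts and the trivial rank-$0$ and rank-$1$ truncation factors is routine.
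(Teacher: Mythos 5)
Your proposal is correct and takes the same approach the paper has in mind: combine the $\mathcal{L}_{\mathrm{M}}$-grading of Proposition~\ref{prop:grading} with the isomorphism $A^{\bullet}(\mathrm{M})_F \cong A^{\bullet}(\operatorname{Tr}\mathrm{M}^F)[-1]$ obtained from Lemmas~\ref{lem:contraction} and \ref{lem:truncation}, and note that in the non-augmented case the atom pieces vanish. One small slip: you cite $h_a^2$ and $h_a h_F - h_a h_{F\vee a}$ as the ``extra defining relations'' of $\underline{A}^{\bullet}(\mathrm{M})$, but those are the extra relations for $A^{\bullet}(\mathrm{M})$; what gets imposed to pass from $A^{\bullet}(\mathrm{M})$ to $\underline{A}^{\bullet}(\mathrm{M})$ is the linear relation $h_a = 0$, which is of course still homogeneous for the $\mathcal{L}_{\mathrm{M}}$-grading, so the conclusion that the decomposition descends is unaffected. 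You are also right that the non-augmented truncation lemma needs the replacement of Theorem~\ref{PD} by Theorem~\ref{PDnonaug}; for completeness, the step showing $h_E^{r-2}\notin\operatorname{ann}(h_E)$ likewise uses Theorem~\ref{dHR} in place of Theorem~\ref{HR}.
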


Using Lemma~\ref{lem:truncation}, we give a second proof of Theorem~\ref{SM}; Theorem~\ref{SMnonaug} can be proved similarly. Note that the proof of Lemma~\ref{lem:truncation} used Poincar\'{e} duality for $A^{\bullet}(\operatorname{Tr} \mathrm{M})$. 

\begin{proof}[Proof of Theorem~\ref{SM}]
We have the decomposition 
$$A^{\bullet}(\mathrm{M}) = \mathbb{Z} \oplus \bigoplus_{F \in \overline{\mathcal{L}}_{\mathrm{M}}} A^{\bullet}(\operatorname{Tr} \mathrm{M}^F)[-1].$$
By induction, we have a standard monomial basis for each summand on the right-hand side. In the above decomposition, a monomial $h_{G_1}^{a_1} \dotsb h_{G_k}^{a_k}$ in $A^{\bullet}(\operatorname{Tr} \mathrm{M}^F)$ is mapped to the monomial $h_{G_1}^{a_1} \dotsb h_{G_k}^{a_k}\cdot h_F$ in $A^{\bullet}(\mathrm{M})$. As $h_{G_1}^{a_1} \dotsb h_{G_k}^{a_k}$ is standard in $A^{\bullet}(\operatorname{Tr} \mathrm{M}^F)$ if and only if $h_{G_1}^{a_1} \dotsb h_{G_k}^{a_k}\cdot h_F$ is standard in $A^{\bullet}(\mathrm{M})$, this implies the result. 
\end{proof}


\begin{remark}
The geometry of the decomposition in Proposition~\ref{prop:grading} is explained in \cite{Rains}*{Section 2}. For each $F \in \overline{\mathcal{L}}_{\mathrm{M}}$, there is an idempotent projection $A^{\bullet}(\mathrm{M}) \to A^{\bullet}(\mathrm{M})$ given by $h_G \mapsto h_G$ if $G \le F$, and otherwise $h_G \mapsto 0$. This map factors through $A^{\bullet}(\mathrm{M}^F)$, and, when $\mathrm{M}$ is realizable, it arises from a retraction of the augmented wonderful variety of a realization whose image is the augmented wonderful variety of a realization of $\mathrm{M}^F$. These projections commute, and $A^{\bullet}(\mathrm{M})_F$ is the set of elements of $A^{\bullet}(\mathrm{M})$ which are fixed by the projection associated to $F$ and killed by the projection associated to $G$ for all $G < F$. 
\end{remark}

\section{Algebras with straightening laws}\label{ref:sec4}

In this section, we construct an algebra with straightening law which is closely related to the (augmented) Chow ring of a matroid. Algebras with straightening laws, also known as \emph{ordinal Hodge algebras} \cite{HodgeAlgebras}, are certain algebras which are equipped with a standard monomial basis. We follow \cite{BrunsVetter} for conventions on algebras with straightening laws.

\begin{definition}\label{def:ASL}
Let $B^{\bullet}$ be a graded algebra over a ring $R$, and let $(\Pi, \le)$ be a finite poset equipped with an injection $\Pi \to B^{\bullet}$ which identifies $\Pi$ with a subset of $B^{\bullet}$. Assume that $B^0 = R$, and that the elements of $\Pi$ are homogeneous of positive degree. We say that $B^{\bullet}$ is an \emph{algebra with straightening law} over $\Pi$ if
\begin{enumerate}
\item\label{ASL1} the \emph{standard  monomials} $\{y_1^{a_1} \dotsb y_{k}^{a_k} : y_1 \le \dotsb \le y_k \in \Pi\}$ form an $R$-basis for $B^{\bullet}$, and
\item \label{ASL2} for each $x, y \in \Pi$ incomparable, when we express $xy$ in terms of the standard monomial basis $xy = \sum a_{\mu} \mu$, where $b_{\mu} \in R$ and $\mu$ is a standard monomial, each $\mu$ with $b_{\mu} \not= 0$ contains a factor of some $z \in \Pi$ with $z < x$ and  $z < y$. 
\end{enumerate}
\end{definition}

We will work in a more general setting than matroids. 
Let $\mathcal{L}$ be a finite \emph{meet-semilattice}, i.e., a finite partially ordered set where any two elements $x, y$ have a greatest lower bound $x \wedge y$. There is a minimal element $\hat{0}$ of $\mathcal{L}$. Our main example will be $\overline{\mathcal{L}}_{\mathrm{M}}^{\mathrm{op}}$, i.e., the inverted poset of flats of a matroid with the empty set removed. Here the minimal element is $E$. 

\begin{theorem}\label{thm:ASL}
Let $\mathcal{L}$ be a finite meet-semilattice, and let 
$$B^{\bullet}(\mathcal{L}) = \frac{\mathbb{Z}[h_x]_{x \in \mathcal{L}}}{((h_x - h_{x \wedge y})(h_y - h_{x \wedge y}) : x, y \in \mathcal{L})}, \text{ with $h_x$ in degree $1$}.$$
Then $B^{\bullet}(\mathcal{L})$ is an algebra with straightening law over $\mathcal{L}$. 
\end{theorem}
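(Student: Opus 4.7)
The plan is to verify the two axioms of an algebra with straightening law; the spanning and the straightening law (\ref{ASL2}) are formal, and the main obstacle is the linear independence of the standard monomials.

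For spanning, the defining relation yields $h_x h_y = h_x h_{x \wedge y} + h_y h_{x \wedge y} - h_{x \wedge y}^2$ for all $x, y \in \mathcal{L}$. Given a monomial $h_{x_1}^{a_1} \dotsb h_{x_k}^{a_k}$ whose support is not a chain, I pick any incomparable pair $x_i, x_j$ and apply this identity, producing a sum of three monomials in which $h_{x_i \wedge x_j}$ has been substituted for one of $h_{x_i}, h_{x_j}$. Letting $\ell(x)$ be the length of the longest chain in $\mathcal{L}$ ending at $x$, we have $\ell(x \wedge y) < \min(\ell(x), \ell(y))$ whenever $x, y$ are incomparable, so the weight $\sum_i a_i \ell(x_i)$ strictly decreases at each step and the process terminates in a $\mathbb{Z}$-linear combination of standard monomials, exactly as in Lemma~\ref{lemma:chain}. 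Axiom (\ref{ASL2}) is then immediate: for incomparable $x, y$ the identity already expresses $h_x h_y$ as a signed sum of the three standard monomials $h_x h_{x \wedge y}$, $h_y h_{x \wedge y}$, $h_{x \wedge y}^2$, each of whose support is a chain and each of which contains the factor $h_{x \wedge y}$ with $x \wedge y < x, y$.

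For linear independence, my plan is a Gr\"obner basis argument. Fix any linear extension of $(\mathcal{L}, \le)$ and equip $\mathbb{Z}[h_x]_{x \in \mathcal{L}}$ with the degree-lexicographic monomial order for which $h_x > h_y$ whenever $x > y$ in $\mathcal{L}$. For incomparable $x, y$ the leading term of the relation $(h_x - h_{x \wedge y})(h_y - h_{x \wedge y})$ is $h_x h_y$, since $h_{x \wedge y}$ is strictly smaller than $h_x$ and $h_y$. Thus the initial ideal is contained in the Stanley--Reisner ideal $(h_x h_y : x, y \text{ incomparable})$ of the order complex $\Delta(\mathcal{L})$, whose quotient has precisely the standard monomials as a $\mathbb{Z}$-basis. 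The crux is to prove equality via Buchberger's criterion. The only nontrivial case is a pair $f_{x, y}, f_{x, v}$ sharing a leading variable, and the S-polynomial $h_v f_{x, y} - h_y f_{x, v}$ reduces to zero under repeated application of the relations for incomparable pairs among $\{x, y, v, x \wedge y, x \wedge v\}$, with the key input being the associativity $(x \wedge y) \wedge v = x \wedge (y \wedge v)$ of the meet in a meet-semilattice; this reduction is somewhat case-heavy but routine. As an alternative, the family $B_t^{\bullet}(\mathcal{L}) = \mathbb{Z}[t][h_x] / (h_x h_y - t(h_x h_{x \wedge y} + h_y h_{x \wedge y} - h_{x \wedge y}^2) : x, y \in \mathcal{L})$ interpolates between $B^{\bullet}(\mathcal{L})$ at $t = 1$ and the Stanley--Reisner ring of $\Delta(\mathcal{L})$ at $t = 0$, and the spanning argument applied fiberwise, together with the known Hilbert series at $t = 0$, forces equality of Hilbert series at all $t$, giving linear independence at $t = 1$.
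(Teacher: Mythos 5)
Your verification of axiom (ASL2) and your spanning argument are the same as the paper's, and they are fine. The issue is in the linear independence step, which is the heart of the theorem; both of your proposed routes have a genuine gap as stated.

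The Buchberger route is a legitimate strategy, but you have deferred exactly the step that carries all the content. "Somewhat case-heavy but routine" is doing a lot of work here: the generators $f_{x,y}=(h_x-h_{x\wedge y})(h_y-h_{x\wedge y})$ are a Gr\"obner basis, but this is an equivalent reformulation of the linear independence you are trying to prove, so it has to actually be proved, not asserted. (I did check the S-polynomial reduction on a couple of small semilattices and it does terminate at zero, so the claim is plausibly true; but it is the whole theorem in disguise, and the reduction that works is not the one-or-two-step affair your phrasing suggests.)

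The degeneration route has two problems. The smaller one: your family is defined over all pairs $x,y\in\mathcal L$, but for comparable $x\le y$ the relation $h_xh_y - t(h_xh_{x\wedge y}+h_yh_{x\wedge y}-h_{x\wedge y}^2)=(1-t)h_xh_y$ is trivial only at $t=1$; at $t=0$ you are killing $h_xh_y$ for a chain $x\le y$ (and $h_x^2$ for $x=y$), so the special fiber is not the Stanley--Reisner ring of the order complex. You need to restrict to incomparable pairs. The larger problem: even after fixing this, the Hilbert-series comparison does not follow. Fiberwise straightening gives $\dim (B_t)_d\le N_d$ for every $t$, and at $t=0$ this is an equality; but semicontinuity of fiber dimension goes the wrong direction for you (the special fiber $t=0$ can only be \emph{larger} than the generic one), so you cannot conclude $\dim(B_1)_d=N_d$. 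The missing input is flatness of the family, which is equivalent to the Gr\"obner basis statement you are trying to avoid. In short, both the Buchberger route and the flatness route require you to actually prove the same nontrivial fact.

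The paper takes a different and fully elementary path: decompose $B^{\bullet}(\mathcal L)=\bigoplus_{x\in\hat{\mathcal L}} B^{\bullet}(\mathcal L)_x$ according to the meet of the support of a monomial (this is the analogue of Proposition~\ref{prop:grading}), observe that $B^{\bullet}(\mathcal L)_x$ is isomorphic to the $\hat 0$-graded piece of the $B$-ring of the smaller semilattice $\{y:y\ge x\}$ (as in Lemma~\ref{lem:contraction}), and then reduce by induction to the piece $B^{\bullet}(\mathcal L)_{\hat 0}$. There the key input is Lemma~\ref{lem:nonzerodivisor}: after the linear change of variables $u_i=h_{x_i}-h_{x_{i+1}}$, none of the defining relations involve $h_{\hat 0}$, so $h_{\hat 0}$ is a non-zerodivisor. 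This lets you peel $h_{\hat 0}$ off a purported dependence among standard monomials containing it and induct on degree, with no Buchberger computation or flatness claim. If you want to keep a Gr\"obner-flavored proof, you would need to actually carry out the S-polynomial reductions; the associativity of meet alone does not obviously suffice, since the reductions pass through relations $f_{u,v}$ for pairs $u,v$ not among $\{x,y,v,x\wedge y,x\wedge v\}$. Alternatively, Lemma~\ref{lem:nonzerodivisor} can be used to show $t$ is a non-zerodivisor in the Rees family, which is the cleanest way to rescue your flatness argument.
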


When $\mathcal{L} = \overline{\mathcal{L}}_{\mathrm{M}}^{\mathrm{op}}$, then 
$$B^{\bullet}(\mathcal{L}) = \frac{\mathbb{Z}[h_F]_{F \in \overline{\mathcal{L}}_{\mathrm{M}}}}{((h_{F} - h_{G \vee F})(h_G - h_{G \vee F}) : F, G \in \overline{\mathcal{L}}_{\mathrm{M}})}.$$
There is a quotient map from $B^{\bullet}(\mathcal{L})$ to $A^{\bullet}(\mathrm{M})$ and $\underline{A}^{\bullet}(\mathrm{M})$. 
In particular, the straightening procedure used in the proof of Lemma~\ref{lemma:chain} is a shadow of the straightening law on $B^{\bullet}(\mathcal{L})$. This is made precise in the proof of Theorem~\ref{SMnonaug} at the end of this section.

The order complex of $\mathcal{L}$ is the simplicial complex whose faces are given by chains in $\mathcal{L}$. Let $C^{\bullet}(\mathcal{L})$ denote the Stanley--Reisner ring of the order complex of $\mathcal{L}$, with variables $\{s_x : x \in \mathcal{L}\}$. 
The theory of algebras with straightening laws shows that $B^{\bullet}(\mathcal{L})$ has a Gr\"{o}bner degeneration to $C^{\bullet}(\mathcal{L})$. 
Note that $B^{\bullet}(\mathcal{L})$ is itself isomorphic to the Stanley--Reisner ring of the order complex of $\mathcal{L}$, via the map which sends $h_x$ to $\sum_{y \le x} s_y$. Note that this is not an isomorphism of algebras with straightening laws when $C^{\bullet}(\mathcal{L})$ is considered with the injection $\mathcal{L} \to C^{\bullet}(\mathcal{L})$ by $x \mapsto s_x$. 


The proof of Theorem~\ref{thm:ASL} is similar to the geometric argument used to show that the homogeneous coordinate ring of a Schubert variety in the Grassmannian is an algebra with straightening law, see \cite{HodgeAlgebras}*{Proposition 1.3}.  We prepare for the proof of Theorem~\ref{thm:ASL} with a lemma. We thank Aldo Conca for explaining the proof to us. 

\begin{lemma}\label{lem:nonzerodivisor}
The element $h_{\hat{0}}$ is not a zero-divisor in $B^{\bullet}(\mathcal{L})$. 
\end{lemma}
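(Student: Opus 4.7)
The plan is to exhibit $B^{\bullet}(\mathcal{L})$ as a polynomial ring over a subring in the variable $h_{\hat{0}}$, from which the conclusion is immediate. The key is to perform a linear change of variables that decouples $h_{\hat{0}}$ from all of the defining relations. This is natural because $x \wedge \hat{0} = \hat{0}$ for every $x$, so $h_{\hat{0}}$ only enters the defining relations through differences $h_x - h_{\hat{0}}$.

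Specifically, I would substitute $u_x := h_x - h_{\hat{0}}$ for every $x \in \mathcal{L} \setminus \{\hat{0}\}$, which gives a polynomial ring isomorphism
$$\mathbb{Z}[h_x]_{x \in \mathcal{L}} \;\cong\; \mathbb{Z}[u_x]_{x \in \mathcal{L} \setminus \{\hat{0}\}}[h_{\hat{0}}].$$
Then I would rewrite each defining relation $(h_x - h_{x \wedge y})(h_y - h_{x \wedge y})$ in these new coordinates. There are three cases. If $\hat{0} \in \{x, y\}$ the relation vanishes identically. If $x, y \neq \hat{0}$ and $x \wedge y \neq \hat{0}$, the relation becomes $(u_x - u_{x \wedge y})(u_y - u_{x \wedge y})$, since the two occurrences of $h_{\hat{0}}$ cancel. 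If $x, y \neq \hat{0}$ but $x \wedge y = \hat{0}$, it becomes $u_x u_y$. In every case, the rewritten relation lies entirely in the subring $\mathbb{Z}[u_x]_{x \neq \hat{0}}$, with no $h_{\hat{0}}$ appearing.

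Letting $R$ denote the quotient of $\mathbb{Z}[u_x]_{x \in \mathcal{L} \setminus \{\hat{0}\}}$ by the ideal generated by the above rewritten relations, I would conclude that $B^{\bullet}(\mathcal{L}) \cong R[h_{\hat{0}}]$ as $\mathbb{Z}$-algebras. Since $h_{\hat{0}}$ is a polynomial indeterminate over $R$, it is automatically a non zero-divisor. The only step requiring any care is the case analysis confirming that $h_{\hat{0}}$ drops out of every relation under the substitution, but this is a direct algebraic check relying only on $x \wedge \hat{0} = \hat{0}$; I do not anticipate any serious obstacle.
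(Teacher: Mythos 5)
Your proof is correct and follows essentially the same approach as the paper: a linear change of variables in the ambient polynomial ring that eliminates $h_{\hat{0}}$ from all the defining relations, so that $B^{\bullet}(\mathcal{L})$ becomes a polynomial ring in $h_{\hat{0}}$ over a subring. The paper uses consecutive differences $u_i = h_{x_i} - h_{x_{i+1}}$ for an ordering with $x_n = \hat{0}$, whereas your choice $u_x = h_x - h_{\hat{0}}$ is slightly more direct since it makes the cancellation of $h_{\hat{0}}$ in each relation immediate rather than requiring telescoping; the underlying observation, that every relation is a product of differences $h_x - h_y$ and hence independent of $h_{\hat{0}}$ after the change of basis, is identical.
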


\begin{proof}
Choose an ordering $x_1, \dotsc, x_n$ of the elements of $\mathcal{L}$ where $x_n = \hat{0}$. For $i=1, \dotsc, n-1$, set $u_i = h_{x_i} - h_{x_{i+1}}$. Then the elements $u_1, \dotsc, u_{n-1}, h_{x_n}$ form a basis for the degree $1$ part of the polynomial ring $\mathbb{Z}[h_F]_{F \in \hat{\mathcal{L}}}$. After we change to this basis, none of the elements of ideal defining $B^{\bullet}(\mathcal{L})$ involve $h_{x_n} = h_{\hat{0}}$. As the ideal of $B^{\bullet}(\mathcal{L})$ is not the unit ideal because it is graded, $h_{\hat{0}}$ is not a zero-divisor. 
\end{proof}

\begin{proof}[Proof of Theorem~\ref{thm:ASL}]
If $x, y \in \mathcal{L}$ are incomparable, then the relation 
$$h_xh_y = h_xh_{x \wedge y} + h_yh_{x \wedge y} - h_{x \wedge y}^2$$
shows that Definition~\ref{def:ASL}(\ref{ASL2}) is satisfied. 
The argument in Lemma~\ref{lemma:chain} shows that $B^{\bullet}(\mathcal{L})$ is spanned by standard monomials, so it suffices to show that the standard monomials are linearly independent. 
Adjoin a maximal element $\hat{1}$ to $\mathcal{L}$ to form $\hat{\mathcal{L}}$. Let $B^{\bullet}(\mathcal{L})_x$ be the span of monomials $h_{y_1}^{a_1} \dotsb h_{y_k}^{a_k}$ such that $y_1 \wedge \dotsb \wedge y_k = x$. For example, $B^{\bullet}(\mathcal{L})_{\hat{1}} = \operatorname{span}(1)$. 
As in the proof of Proposition~\ref{prop:grading}, there is a direct sum decomposition
$$B^{\bullet}(\mathcal{L}) = \bigoplus_{x \in \hat{\mathcal{L}}} B^{\bullet}(\mathcal{L})_x.$$
It therefore suffices to show that the standard monomials $h_{x_1}^{a_1} \dotsb h_{x_k}^{a_k}$, with $x_1 \le \dotsb \le x_k$, are linearly independent in $B^{\bullet}(\mathcal{L})_{x_1}$. 

Let $\hat{\mathcal{L}}_{x}$ be the interval $[x, \hat{1}]$ in $\hat{\mathcal{L}}$. We see that $\hat{\mathcal{L}}_x \setminus \hat{1}$ is a meet semilattice, and, as in the proof of Lemma~\ref{lem:contraction}, we have
$$B^{\bullet}(\mathcal{L})_x \stackrel{\sim}{\to} B^{\bullet}(\hat{\mathcal{L}}_x \setminus \hat{1})_{\hat{0}} \text{ as abelian groups}.$$
In particular, by induction it suffices to show that the standard monomials where $h_{\hat{0}}$ appears are linearly independent. If there was a linear dependence among the standard monomials where $h_{\hat{0}}$ appears, then that would imply that $h_{\hat{0}}$ is a zero-divisor, which contradicts Lemma~\ref{lem:nonzerodivisor}. 
\end{proof}

One could alternatively establish the linear independence of the standard monomials using the isomorphism $C^{\bullet}(\mathcal{L}) \to B^{\bullet}(\mathcal{L})$. 

Using Theorem~\ref{thm:ASL} in the case $\mathcal{L} = \overline{\mathcal{L}}_{\mathrm{M}}^{\mathrm{op}}$, we can give another proof of Theorem~\ref{SMnonaug}. One can prove Theorem~\ref{SM} using a similar but more lengthy argument. 

\begin{proof}[Proof of Theorem~\ref{SMnonaug}]
We will use Theorem~\ref{thm:ASL} to construct a linear endomorphism of $\psi$ of $\mathbb{Z}[h_F]_{F \in \overline{\mathcal{L}}_{\mathrm{M}}}$ whose image is the span of the standard monomials and whose kernel is the ideal defining $\underline{A}^{\bullet}(\mathrm{M})$. This gives an (abelian group) direct sum decomposition of the polynomial ring, which implies that the standard monomials form a basis for $\underline{A}^{\bullet}(\mathrm{M})$. 

Let $C \subseteq \mathbb{Z}[h_F]_{F \in \overline{\mathcal{L}}_{\mathrm{M}}}$ be the linear span of $\{h_{F_1}^{a_1} \dotsb h_{F_\ell}^{a_\ell} : \emptyset < F_1 < \dotsb < F_\ell\}$. Theorem~\ref{thm:ASL} gives a surjective linear map $\psi_1 \colon \mathbb{Z}[h_F]_{F \in \overline{\mathcal{L}}_{\mathrm{M}}} \to C$: we consider the image of an element of $\mathbb{Z}[h_F]_{F \in \overline{\mathcal{L}}_{\mathrm{M}}}$ inside of $B^{\bullet}(\overline{\mathcal{L}}_{\mathrm{M}}^{\mathrm{op}})$ and then express it in terms of the standard monomial basis there. 

The proofs of Proposition~\ref{prop:SMspanning} and Proposition~\ref{prop:SMspanningnonaug} give a map $\psi_2$ from $C$ to the linear span of the standard monomials for $\underline{A}^{\bullet}(\mathrm{M})$: a monomial $h_{F_1}^{a_1} \dotsb h_{F_\ell}^{a_\ell}$ with $\emptyset < F_1 < \dotsb < F_{\ell}$ is either $0$ in $\underline{A}^{\bullet}(\mathrm{M})$ or is equal to a particular standard monomial for $\underline{A}^{\bullet}(\mathrm{M})$ (which is independent of the choices involved). We define $\psi_2$ to be the map which sends $h_{F_1}^{a_1} \dotsb h_{F_\ell}^{a_\ell}$ to either $0$ or this standard monomial. We define $\psi$ to be $\psi_2 \circ \psi_1$. 

In proof of Proposition~\ref{prop:SMspanning} and Proposition~\ref{prop:SMspanningnonaug}, a procedure is described which writes any monomial in $\underline{A}^{\bullet}(\mathrm{M})$ in terms of the standard monomials: use the relations of the form $(h_F - h_{F \vee G})(h_G - h_{F \vee G}) = 0$ to write the monomial as a sum of monomials corresponding to chains, and each monomial corresponding to a chain is either $0$ or equal to a standard monomial. 
The well-definedness of $\psi$ implies that this procedure is independent of the choices involved.

It is clear that $\psi$ surjects onto the span of the standard monomials; we need to show that the ideal defining $\underline{A}^{\bullet}(\mathrm{M})$ is in the kernel of $\psi$. By construction, the kernel of $\psi$ is contained in the ideal defining $\underline{A}^{\bullet}(\mathrm{M})$. As $\psi$ is linear, it suffices to prove that $\psi$ kills the product of any monomial $m$ with a generator of the ideal defining $\underline{A}^{\bullet}(\mathrm{M})$. 

By construction, $\psi_1(m \cdot (h_F - h_{F \vee G})(h_G - h_{F \vee G})) = 0$ for any incomparable flats $F, G$, so $\psi$ kills $m \cdot (h_F - h_{F \vee G})(h_G - h_{F \vee G})$ as well. 

We need to check that, for any atom $a$ and monomial $m$, we have $\psi(m \cdot h_a) = 0$. We apply the procedure used to compute $\psi_1$ (as described in Lemma~\ref{lemma:chain}) to $m \cdot h_a$, i.e., we find a pair of flats $\{F, G\}$, where $h_F$ and $h_G$ appear in $m \cdot h_a$, which are incomparable and which are maximal with these properties. We then use the relation $h_Fh_G = h_Fh_{F \vee G} + h_G h_{F \vee G} - h_{F \vee G}^2$. If $a \not \in \{F, G\}$, then all resulting terms are divisible by $h_a$. Note that applying $\psi_2$ kills any term where $h_a$ appear. 

It therefore suffices to understand the case when there is a flat $F$ such that $h_F$ appears in $m \cdot h_a$, $F$ is incomparable with $a$, and for all $G$ with $h_G$ appearing $m \cdot h_a$, either $G = a$, $G \le F$, or $G \ge F \vee a$. Define $m'$ by $m = m' \cdot h_F$
We use the relation 
$$m \cdot h_a = m' \cdot h_ah_F = m' \cdot h_ah_{F \vee a} + m' \cdot h_{F}h_{F \vee a} - m' \cdot h_{F \vee a}^2.$$
The terms appearing after further straightening of $m' \cdot h_{F}h_{F \vee a}$ will be the same as those in  $m' \cdot h_{F \vee a}^2$, except with $h_{F} h_{F \vee a}$ replaced by $h_{F \vee a}^2$. But these terms will cancel when we apply $\psi_2$.  
\end{proof}

\bibliographystyle{alpha}
\bibliography{matroid.bib}

\end{document}